\newcommand{\R}{\mathbb{R}}
\newcommand{\N}{\mathbb{N}}
\newcommand{\compcent}[1]{\vcenter{\hbox{$#1\circ$}}}
\newcommand{\comp}{\mathbin{\mathchoice
		{\compcent\scriptstyle}{\compcent\scriptstyle}
		{\compcent\scriptscriptstyle}{\compcent\scriptscriptstyle}}}
\newtheoremstyle{ojmSty}{3pt}{3pt} {}{}{\scshape}{.}{.5em}{}
\theoremstyle{plain}
\newtheorem{thm}{Theorem}[section]
\newtheorem{lemma}[thm]{Lemma}
\providecommand{\customgenericname}{}
\newcommand{\newcustomtheorem}[2]{
	\newenvironment{#1}[1]
	{
		\renewcommand\customgenericname{#2}
		\renewcommand\theinnercustomgeneric{##1}
		\innercustomgeneric
	}
	{\endinnercustomgeneric}
}
\newtheorem*{thm*}{Theorem}
\newtheorem{prop}[thm]{Proposition}
\theoremstyle{ojmSty}
\newtheorem{defi}[thm]{Definition}
\newtheorem{remark}{Remark}[section]
\begin{document}
	
	\title{Equivalence Between Four Models of Associahedra}
	\author{Somnath Basu, Sandip Samanta}
	\address{Department of Mathematics and Statistics, Indian Institute of Science Education and Research Kolkata,
		Mohanpur, West Bengal, India -- 741 246}
	\email{somnath.basu@iiserkol.ac.in, ss18ip021@iiserkol.ac.in}
	\keywords{Associahedra, Graph Cubeahedra, Multiplihedra, Design tubing}
	\subjclass[2020]{Primary 52B11, Secondary 52B12, 06A07, 52B05.}
	
	\begin{abstract}
		We present a combinatorial isomorphism between Stasheff associahedra and an inductive cone construction of those complexes given by Loday. We give an alternate description of certain polytopes, known as multiplihedra, which arise in the study of $A_\infty$ maps. We also provide new combinatorial isomorphisms between Stasheff associahedra, collapsed multiplihedra, and graph cubeahedra for path graphs. 
	\end{abstract}

	\maketitle
	\tableofcontents
	\section{Introduction}
    Dov Tamari, in his 1951 thesis \cite{D.Tamari}, first described associahedra (with notation $\mathscr{M}_{n-1}$) as the realization of his poset lattice of bracketings (parenthesizations) of a word with $n$ letters. He had also pictured the $1$, $2$, and $3$ dimensional cases (cf. Figure \ref{fig:D.Tamari}). Later these were rediscovered by Jim Stasheff \cite{HAH} in his 1960 thesis on homotopy associativity and based loop spaces. Stasheff had defined these (with notation $K_n$) as a convex, curvilinear subset of the $(n-2)$ dimensional unit cube (cf. Figure \ref{fig:Stasheff}) such that it is homeomorphic to the cube. Convex polytope realizations of associahedra were subsequently done by many people \cite{HuguetTamari, Haiman, Lee, Loday1}. These polytopes are commonly known as \textit{associahedra} or \textit{Stasheff complexes}.  \\
	\hspace*{0.5cm}Ever since Stasheff's work, associahedra (and their face complexes) have continued to appear in various mathematical fields apart from its crucial role in homotopy associative algebras and its important role in discrete geometry. Indeed, the associahedron $K_{n-1}$ appears as a fundamental tile of $\overline{\mathcal{M}}_{0,n}(\mathbb{R})$, the compactification of the real moduli space of punctured Riemann sphere \cite{Devadoss3}. It also appears in the analysis of the compactified moduli space of \textit{nodal disks} with markings, as described by Fukaya and Oh \cite{Fukaya}. An important connection between associahedra (and its generalizations) and finite root systems was established in 2003 by the work of Fomin and Zelevinsky \cite{FominZelevinsky}. In 2006 Carr and Devadoss \cite{Devadoss2} generalized associahedra to graph associahedra $\mathcal{K}G$ for a given graph $G$. These appear as the tiling of minimal blow-ups of certain Coxeter complexes \cite{Devadoss2}. In particular, if $G$ is a path graph, then $\mathcal{K}G$ is an associahedron. Bowlin and Brin \cite{BowlinBrin}, in 2013, gave a precise conjecture about existence of coloured paths in associahedra. They showed that this conjecture is equivalent to the four colour theorem (4CT). Earlier, in 1988, there was a celebrated work \cite{SleatorTarjanThurston} of Sleator, Tarjan and Thurston on the diameter of associahedra. While working on dynamic optimality conjecture, they had used hyperbolic geometry techniques to show that the diameter of $K_d$ is at most $2d-8$ when $d\geq 11$, and this bound is sharp when $d$ is large enough. Pournin \cite{Pournin}, almost twenty five years later, showed that this bound is sharp for $d\geq 11$. Moreover, his proof was combinatorial. Even in theoretical physics, recent works \cite{Mizera,HamedBaiHeYan,FerroTomasz} indicate that associahedron plays a key role in the theory of scattering amplitudes.
	\begin{figure}[htbp]
		\centering
		\begin{subfigure}[b]{0.45\linewidth}
			\centering
			\includegraphics[width=0.9\linewidth]{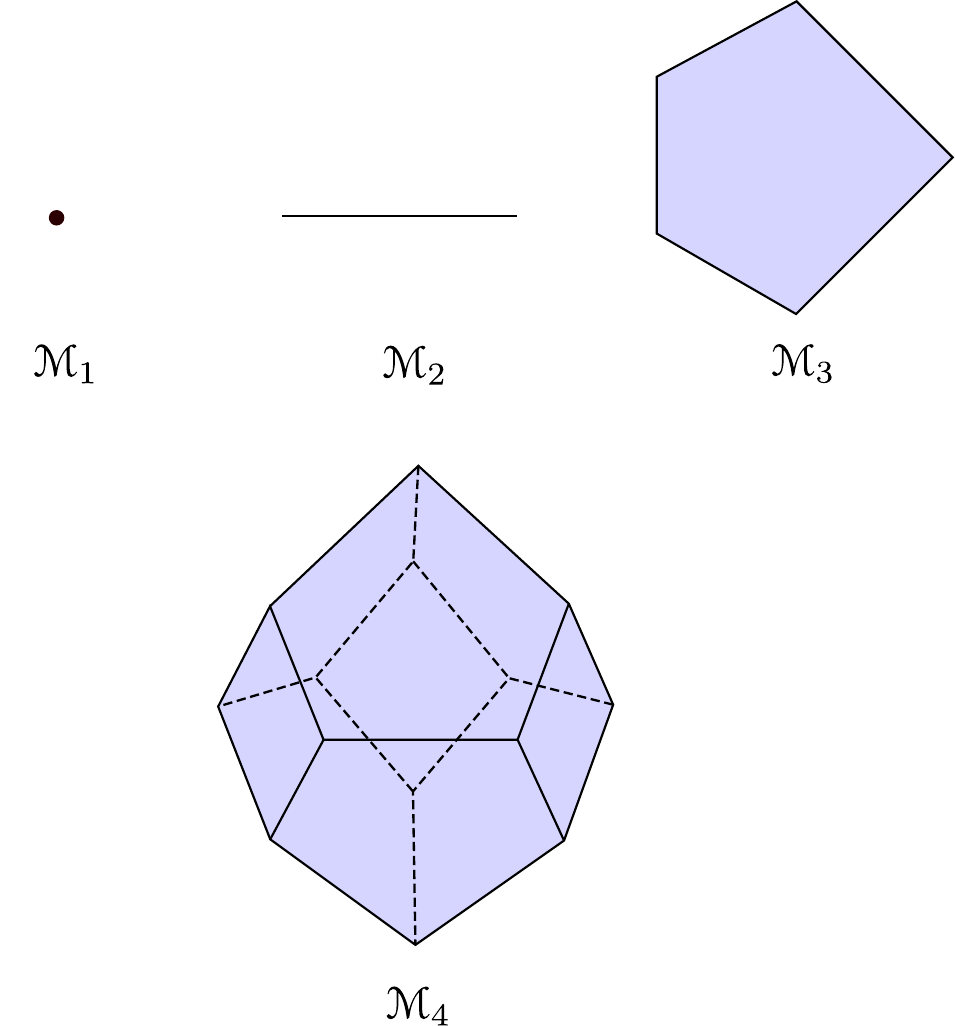}
			\caption{Tamari's associahedra}
			\label{fig:D.Tamari}
		\end{subfigure}
		\hspace{1cm}
		\begin{subfigure}[b]{0.45\linewidth}
			\centering
			\includegraphics[width=0.8\linewidth]{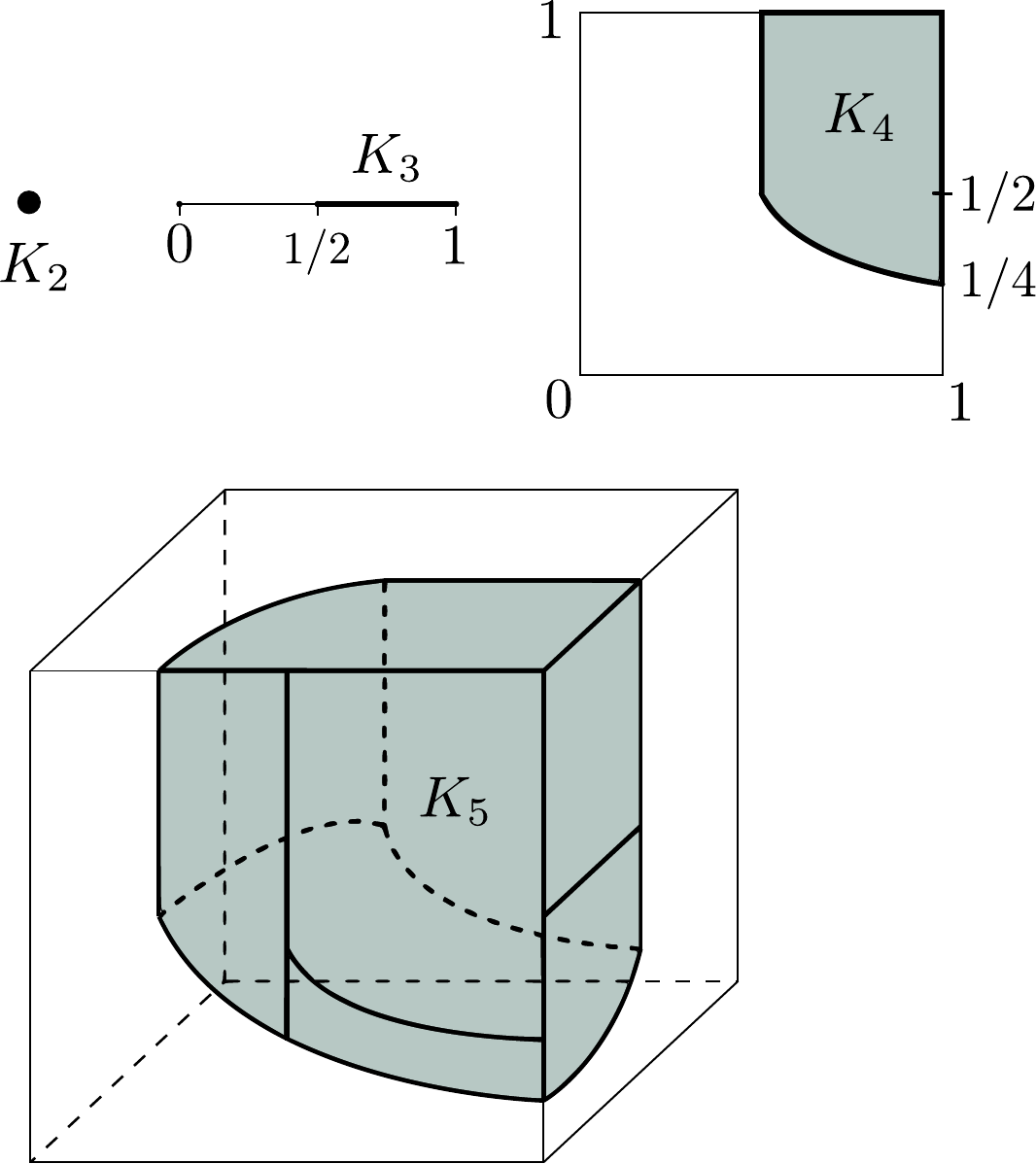}
			\caption{Stasheff's associahedra}
			\label{fig:Stasheff}
		\end{subfigure}
		\caption{Earliest realizations of associahedra}
	\end{figure}
	
	Let us briefly recall the construction in \cite{HAH}. Stasheff, respecting Tamari's description, had sub-divided the boundary of $K_n$ in such a way that the number of faces of codimension $1$ and the adjacencies in his model matched with that in \cite{D.Tamari}. The boundary of $K_n$, denoted by $L_n$, is the union of homeomorphic images of $K_p\times_r K_q$ ($p+q=n+1$, $r=1,2,...,p$), where $K_p\times_r K_q$ corresponds to the bracketing $x_1\ldots (x_r\ldots x_{r+q-1})\ldots x_n$. 
	Stasheff started with $K_2$ as a point and defined $K_n$, inductively, as a cone over $L_n$. This definition of $K_n$ involves $K_2$ through $K_{n-1}$ all together. \\
	\hspace*{0.5cm}As associahedra are contractible, these are of less interest as spaces in isolation. However, as combinatorial objects, the key properties of it are inherent in its description as a convex polytope. Much later, in 2005, J. L. Loday \cite{Loday2} gave a different inductive construction of $K_n$ starting from $K_{n-1}$, leaving it to the reader to verify the details. Being a predominantly topological construction, it is not apparent why the cone construction of Loday gives rise to the known combinatorial structure on the associahedra. It is, therefore, natural to search for an explicit combinatorial isomorphism between these two constructions, leading to our first result (Theorem \ref{thm:StasheffLoday}).
	\begin{customthm}{A} \label{thm:iStasheffLoday}
		Stasheff complexes are combinatorially isomorphic to Loday's cone construction of associahedra.
	\end{customthm}
	There is another set of complexes $\mathcal{J}(n)$, known as \textit{multiplihedra}, which were first introduced and pictured by Stasheff \cite{Stas} in order to define $A_\infty$ maps between $A_\infty$ spaces, for $n\leq 4$. Mau and Woodward \cite{Woodward} have shown $\mathcal{J}(n)$'s to be compactifications of the moduli space of quilted disks. Boardman and Vogt \cite{Boardman} defined $\mathcal{J}(n)$ in terms of painted trees (refer to Definition \ref{def:PaintedTree}). The first detailed definition of $\mathcal{J}(n)$ and its combinatorial properties were described by Iwase and Mimura \cite{IwaseMimura}, while its realization as convex polytopes was achieved by Forcey \cite{Forcey1}, combining the description of Boardman-Vogt and Iwase-Mimura. Later, Devadoss and Forcey \cite{Devadoss4} generalized multiplihedra to \textit{graph multiplihedra} $\mathcal{J}G$ for a given graph $G$. \\
 
	\hspace*{0.5cm}In the study of $A_\infty$ maps from an $A_\infty$ space to a strictly associative $H$ space (i.e., a topological monoid), multiplihedra degenerate to what we call \textit{collapsed multiplihedra}. Stasheff \cite{Stas} had pointed out that these complexes resemble associahedra. It has been observed that collapsed multiplihedra can be viewed as degeneration of graph multiplihedra for path graphs.
	It was long assumed that for $A_\infty$ maps from a strictly associative $H$ space to a $A_\infty$ space, multiplihedra would likewise degenerate to yield associahedra. But it was Forcey \cite{Forcey2} who realized that new polytopes were needed. These were constructed by him and named \textit{composihedra}.  \\
	\hspace*{0.5cm}In this paper, we will give an equivalent definition (Definition \ref{def:Multiplihedra}) of multiplihedra, which induces a definition for collapsed multiplihedra (Definition \ref{def:CollMulti}). Using this definition, we will give a proof of the following (Proposition \ref{thm:StasMulti}) by providing a new bijection of underlying posets.
	\begin{customobs}{a} \label{thm:iStasMulti}
		Stasheff complexes and collapsed multiplihedra are combinatorially isomorphic.
	\end{customobs}
	\noindent There is a well-known bijection $bij_3$ (cf. Forcey's paper \cite[p. 195]{ForceyLattice}; prior to Remark 2.6 and Figure 7) which is different from ours. 
	
	In 2010, Devadoss, Heath, and Vipismakul \cite{Devadoss1} defined a polytope called \textit{graph cubeahedron} (denoted by $\mathcal{C}G$) associated to a graph $G$. These are obtained by truncating certain faces of a cube. They gave a convex realization of these polytopes as simple convex polytopes whose face poset is isomorphic to the poset of design tubings for graphs. Graph cubeahedra for cycle graphs $G$ (called \textit{halohedra}) appear as the moduli space of annulus with marked points on one boundary circle. In this paper, we are mainly interested in $\mathcal{C}G$ for path graphs $G$ and will prove the following (Proposition \ref{thm:MultiCubea}) by providing a new bijection of underlying posets.
	\begin{customobs}{b}\label{thm:iMultiCubea}
		The collapsed multiplihedra and graph cubeahedra for path graphs are combinatorially isomorphic.
	\end{customobs}
	\noindent It turns out that bijection obtained between the posets governing Stasheff complexes and graph cubeahedra (for path graphs), by combining our bijections from Observations \ref{thm:iStasMulti} and \ref{thm:iMultiCubea}, is the bijection of posets defined in \cite[Proposition 14]{Devadoss1}. From our perspective, the bijection in Observation \ref{thm:iMultiCubea} is natural. Combining Theorem \ref{thm:iStasheffLoday}, Observations \ref{thm:iStasMulti} and \ref{thm:iMultiCubea}, we obtain the following result (Theorem \ref{thm:Main}).
	 
	\begin{customthm}{B}\label{thm:iMain}
		The four models of associahedra - Stasheff complexes, complexes obtained by Loday's cone construction, collapsed multiplihedra, graph cubeahedra for path graphs - are all combinatorially isomorphic.
	\end{customthm}
	
	\noindent \textsc{Organization of the paper}. The paper is organized as follows. In \S \ref{Stasheff}, we will review some of the definitions and results related to Stasheff's description of associahedra. In \S \ref{LodayCone}, the description of Loday's cone construction and some related theorems are presented while in \S \ref{CollapsedMulti} an equivalent definition of multiplihedra and collapsed multiplihedra is given. In \S \ref{Cubeahedra} the definition of tubings, design tubings, graph cubeahedra, and related results are presented. The next section \S \ref{Main} contains the proof of the main result (Theorem \ref{thm:iMain}), which is a combination of three results. In \S \ref{LodayStasheff} we prove Theorem \ref{thm:iStasheffLoday} while \S \ref{StasMulti} and \S \ref{MultiCubea} are devoted to the proofs of Observations \ref{thm:iStasMulti} and \ref{thm:iMultiCubea} respectively.\\
	
	\noindent \textsc{Acknowledgments.} The authors would like to thank Stefan Forcey for an initial discussion on this topic as well as several useful comments on the first draft. The first author acknowledges the support of SERB MATRICS grant MTR/2017/000807 for the funding opportunity. The second author is supported by a PMRF fellowship. We would like to express our sincere gratitude to the anonymous referee whose insights and constructive comments helped in improving this manuscript.

	\section{Description of Four Models of Associahedra}
	An $H$-space is a topological space $X$ equipped with a binary operation $m: X^2 \to X$ having a unit $e$. It is a natural generalization of the notion of topological groups. We can rewrite $m$ as a map $m_2:K_2\times X^2\to X$, where $K_2$ is a point. If $m$ is not associative but homotopy associative (called weakly associative), then we have a map $m_3:K_3\times X^3\to X$ defined through the homotopy between $m\comp (m\times 1)$ and $m\comp (1\times m)$, where $K_3$ is an interval. Similarly, we can define five different maps from $X^4 \to X$ using $m$, and between any two such maps, there are two different homotopies (using the chosen homotopy associativity). If those two homotopies are homotopic, this defines a map $m_4:K_4\times X^4\to X$, where $K_4$ is a filled pentagon. If we continue this process,  we obtain a map $m_n:K_n \times X^n\to X$ for $n\geq 2$. These complexes $K_n$, called associahedra, are our main objects of interest.\\

	\hspace*{0.5cm}We will briefly describe the four models of associahedra, one in each subsection, we are concerned with: Stasheff complexes, Loday's cone construction, collapsed multiplihedra, and graph cubeahedra for path graphs.
	\subsection{Stasheff complexes} \label{Stasheff}
	Stasheff defined for each $i\geq 2$, a special cell complex $K_i$ as a subset of $I^{i-2}$. It is a simplicial complex and has $i$ degeneracy operators $s_1,...,s_i$. Moreover, $K_i$ has $\binom{i}{2}-1$ faces of codimension $1$. The complexes $K_i$, as combinatorial objects, are more complicated than the standard simplices $\Delta^{i-2}$. According to Stasheff \cite{HAH}, it is defined through following intuitive content:\\
	\indent Consider a word with $i$ letters, say $x_1x_2\ldots x_i$, and all meaningful ways of inserting one pair of parentheses. To each such insertion except for $(x_1x_2...x_i)$, there corresponds a cell of $L_i$, the boundary of $K_i$. If the parentheses enclose $x_k$ through $x_{k+s-1}$, we regard this cell as $K_r\times_k K_s$, the homeomorphic image of $K_r\times K_s$ under a map which we call $\partial_k(r,s)$, where $r+s=i+1$. Two such cells intersect only on their boundaries and the `edges' so formed correspond to inserting two pairs of parentheses in the word. 
    Specifically, the intersection of two cells, namely $K_r\times_k K_s$ and $K_{r'}\times_{k'} K_{s'}$, occurs if and only if one of the following conditions is satisfied: $(k\leq k'<k'+s'-1\leq k+s-1)$, or $(k'\leq k<k+s-1\leq k'+s'-1)$, or $(k+s-1<k')$, or $(k'+s'-1<k)$. Furthermore, if these two cells intersect, the intersection takes place along an $(i-4)$-dimensional sub-cell, which is either of the form $K_a \times_j (K_b \times_l K_c)$ appears as a cell of $K_a\times_j \partial K_{b+c-1}$, or of the form $(K_a\times_j K_b)\times_l K_c$ appears as a cell of $\partial K_{a+b-1}\times_l K_c$ for $a+b+c=i+2$. 
	Now the adjacency criterion is given by the following relations: 
	\begin{itemize}
		\item[(a)] $\partial_{j}(r, s+t-1)\left(1 \times \partial_{k}(s, t)\right)=\partial_{j+k-1}(r+s-1, t)\left(\partial_{j}(r, s) \times 1\right)$
		\item[(b)] $\partial_{j+s-1}(r+s-1, t)\left(\partial_{k}(r, s) \times 1\right)=\partial_{k}(r+t-1, s)\left(\partial_{j}(r, t) \times 1\right)(1 \times T)$
	\end{itemize}
	where $T: K_{s} \times K_{t} \rightarrow K_{t} \times K_{s}$ permutes the factors. In terms of homeomorphic images of $K_r\times K_s\times K_t$, the above two relations are equivalent respectively to the identifications
	\begin{align}
		&K_r\times_j(K_s\times_k K_t)=(K_r\times_j K_s)\times_{j+k-1} K_t\label{eq:1}\\
		&(K_r\times_k K_s)\times_{j+s-1} K_t=(K_r\times_j K_t)\times_k K_s\label{eq:2}
	\end{align} 
    One can easily track these relations once they are identified with $2$-bracketing of the word $x_1x_2\ldots x_i$. The cell $K_a\times_j(K_b\times_l K_c)$ corresponds to the nested $2$-bracketing $$x_1\ldots (x_j\ldots (x_{j+l-1}\ldots x_{j+l+c-2})\ldots x_{j+b+c-2})\ldots x_i,$$ for $a+b+c=i+2$. The cell $(K_a \times_j K_b)\times_l K_c$ corresponds to $y_1\ldots(y_j\ldots y_{j+b-1})\ldots y_{a+b-1}$ with 
    $$y_u=\begin{cases}
        x_u &\text{ if }u<l\\
        (x_l\ldots x_{l+c-1}) &\text{ if }u=l\\
        x_{u+c-1} &\text{ if }u>l
    \end{cases}$$
    Now if $j\leq l\leq j+b$, then $(K_a \times_j K_b)\times_l K_c$ represents a nested $2$-bracketing and thus corresponds to a cell of first type; this case is reflected in the identification (\ref{eq:1}) above. If $l+c<j$ or $j+b<l$, then $(K_a \times_j K_b)\times_l K_c$ represents a disjoint $2$-bracketing and that corresponds to two possible cells of the second type; this case is reflected in (\ref{eq:2}).
    
	This is enough to obtain $K_{i}$ by induction. Start with $K_{2}=\{\ast\}$ as a point. Given $K_{2}$ through $K_{i-1}$, construct $L_{i}$ by fitting together copies of $K_{r} \times_k K_{s}$ as indicated by the above conditions, and take $K_{i}$ to be the cone on $L_{i}$.
	Stasheff proved that these complexes are homeomorphic to cubes.
	\begin{prop}{\cite[Proposition 3]{HAH}}
		$K_{i}$ is homeomorphic to $I^{i-2}$ and  
		degeneracy maps $s_{j}: K_{i} \rightarrow K_{i-1}$ for $1 \leq j \leq i$ can be defined so that the following relations hold:
		\begin{enumerate}
			\item $s_{j} s_{k}=s_{k} s_{j+1}$ for $k \leq j$.
			\item $s_{j} \partial_{k}(r, s)=\partial_{k-1}(r-1, s)\left(s_{j} \times 1\right)$ for $j<k$ and $r>2$.
			\item $s_{j} \partial_{k}(r, s)=\partial_{k}(r, s-1)\left(1 \times s_{j-k+1}\right)$ for $s>2, k \leq j<k+s,\\
			s_{j} \partial_{k}(i-1,2)=\pi_{1}$ for $1<j=k<i$ and $1<j=k+1 \leq i, $\\
			$s_{1} \partial_{2}(2, i-1)=\pi_{2}$ and $s_{i} \partial_{1}(2, i-1)=\pi_{2}$,\\
			where $\pi_{m}$ for $m=1,2$ is projection onto the $m$-th factor.
			\item $s_{j} \partial_{k}(r, s)=\partial_{k}(r-1, s)\left(s_{j-s+1} \times 1\right)$ for $k+s \leq j$.
		\end{enumerate}
	\end{prop}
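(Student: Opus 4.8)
The plan is to induct on $i$, proving simultaneously the homeomorphism and the existence of the degeneracy maps; the base cases $K_2=\{\ast\}\cong I^0$ and $K_3\cong I^1$ are immediate from the construction. Assume the statement for all $K_r$ with $2\le r\le i-1$, with a fixed homeomorphism $h_r\colon K_r\to I^{r-2}$. Since $K_i$ is defined as the cone $C(L_i)$, and since the cone on an $(i-3)$-sphere is an $(i-2)$-ball, the homeomorphism $K_i\cong I^{i-2}$ reduces to showing that $L_i$ is homeomorphic to $S^{i-3}$. By the inductive hypothesis each facet $K_r\times_k K_s$ (with $r+s=i+1$) is homeomorphic to $I^{r-2}\times I^{s-2}=I^{i-3}$, so $L_i$ is a union of $\binom{i}{2}-1$ closed $(i-3)$-cells.

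First I would establish that $L_i$ is a closed combinatorial $(i-3)$-manifold. The codimension-one faces of a facet correspond to inserting a second pair of parentheses, and the identifications \eqref{eq:1} and \eqref{eq:2} (equivalently the relations (a) and (b)) show that each such face is shared by exactly two facets, arising as the two ways of forgetting one of the two parenthesis pairs. Hence $L_i$ has no boundary and is a connected pseudomanifold. To upgrade this to a sphere I would exhibit a shelling of the cell complex $L_i$ (equivalently, verify that it is the boundary complex of a convex polytope, as furnished by the later realizations); alternatively, for $i\ge 5$ one checks that $L_i$ is simply connected by van Kampen (facets and their pairwise intersections being cubes, hence simply connected) and has the homology of $S^{i-3}$ by a Mayer--Vietoris computation matching the face count, with the cases $i=3,4$ (namely $S^0$ and the pentagon boundary $S^1$) handled directly. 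Coning then yields $K_i=C(L_i)\cong C(S^{i-3})=D^{i-2}\cong I^{i-2}$.

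For the degeneracy maps I would define $s_j\colon K_i\to K_{i-1}$ inductively, using relations (2), (3) and (4) as the \emph{definition} of $s_j$ on each facet of $L_i$: on $K_r\times_k K_s$ the composite $s_j\circ\partial_k(r,s)$ is prescribed, according to the position of $j$ relative to $k$ and $k+s$, in terms of the lower degeneracies $s_\ast$ on $K_r$ or $K_s$, the face maps $\partial$, and the projections $\pi_1,\pi_2$. Because each $\partial_k(r,s)$ is a homeomorphism onto its facet, this determines $s_j$ on every facet; pasting these together along $L_i$ and then extending radially over the cone (sending the apex of $K_i$ to the apex of $K_{i-1}$ and scaling the cone parameter) defines $s_j$ on all of $K_i$. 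Relations (2)--(4), being statements about the restriction of $s_j$ to faces, then hold by construction.

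The main obstacle is the well-definedness of $s_j$ on $L_i$: where two facets meet along a codimension-two cell, the two prescriptions coming from \eqref{eq:1} or \eqref{eq:2} must agree. This is a finite but delicate case analysis in the indices $j,k,r,s,t$, in which each identity reduces---via the inductive form of relations (1)--(4) on the smaller associahedra---to an equality among compositions of lower degeneracy and face maps; organizing the cases so that every overlap is checked exactly once is the principal bookkeeping difficulty. Once $s_j$ is well defined, the remaining relation (1), $s_j s_k=s_k s_{j+1}$ for $k\le j$, I would verify first on $L_i$ by the same reduction to the inductive hypothesis and then on the cone by naturality of the radial extension. Granting sphere-ness of $L_i$, the topology of the construction is straightforward, and it is this index-matching consistency check that constitutes the heart of the degeneracy-map part of the argument.
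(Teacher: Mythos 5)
This proposition is background quoted verbatim from Stasheff \cite[Proposition 3]{HAH}; the paper under review gives no proof of it, so your attempt can only be compared with Stasheff's original argument. Your overall architecture is reasonable and partly parallels what one must do: induct on $i$, use the identifications \eqref{eq:1} and \eqref{eq:2} to see that each codimension-one face of a facet $K_r\times_k K_s$ lies in exactly two facets, define $s_j$ facet-by-facet via relations (2)--(4), check consistency on overlaps, and extend over the cone. But there is a genuine gap at the step you yourself flag as the topological crux: recognizing $L_i$ as a sphere. Your fallback route (simply connected homology $(i-3)$-sphere, hence a sphere) silently invokes the generalized Poincar\'e conjecture, including the Freedman and Perelman cases when $i-3=3,4$ --- machinery wildly disproportionate to the statement and not justified by the sketch; and the ``Mayer--Vietoris computation matching the face count'' is not an argument, since running it requires the full combinatorics of the face poset, while van Kampen over a closed facet cover needs control of pairwise and triple intersections that you have not established. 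Your preferred route (shelling, or polytopality ``as furnished by the later realizations'') is the correct one, but as stated it is deferred rather than done: to import the convex realizations of Haiman, Lee, or Loday you must first prove that Stasheff's $L_i$ is a regular CW complex whose face poset is isomorphic to the boundary poset of the realized polytope, and then invoke the fact that regular CW complexes are determined up to homeomorphism by their face posets. That identification is nontrivial --- it is of exactly the same nature as Theorem \ref{thm:StasheffLoday} of this paper --- so without it the conclusion $L_i\cong S^{i-3}$ is unsupported.

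By contrast, Stasheff's own proof avoids abstract sphere recognition entirely: he realizes $K_i$ concretely as a convex curvilinear subset of $I^{i-2}$, so the homeomorphism with the cube is built into an explicit inductive construction, and the degeneracies $s_j$ are given by explicit coordinate formulas, with relations (1)--(4) verified by direct computation. Two further soft spots in your degeneracy step deserve mention. First, the exceptional clauses of relation (3), where $s_j\partial_k(i-1,2)=\pi_1$ and $s_1\partial_2(2,i-1)=s_i\partial_1(2,i-1)=\pi_2$, mean that $s_j$ restricted to $L_i$ surjects onto all of $K_{i-1}$ rather than mapping boundary to boundary; your radial extension still makes sense (cone off using the cone structure of the target), but the overlap checks along codimension-two cells then compare projections with composites of face and degeneracy maps, and this case analysis is precisely where the content lies --- asserting it is ``finite but delicate'' does not discharge it. Second, verifying relation (1) ``by naturality of the radial extension'' is too quick: since $s_k|_{L_i}$ does not land in $L_{i-1}$, the composite $s_js_k$ is not itself a cone extension of its boundary restriction, so the reduction of (1) to the inductive hypothesis on $L_i$ needs an explicit argument about how $s_j$ interacts with cone lines of $K_{i-1}$. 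In short: the skeleton is right, but the sphere-recognition step as proposed either assumes far too much or quietly presupposes the poset comparison it was meant to precede, and the consistency checks that you correctly identify as the heart of the matter are left entirely open.
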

	
	\noindent
	Using boundary maps $\partial_k(r,s)$ and degeneracy maps $s_j$, Stasheff defined the following.
	\begin{defi}[$A_n$ form and $A_n$ space]
		An $A_n$ form on a space $X$ consists of a family of maps $m_i:K_i\times X^i\to X$ for $2\leq i\leq n$ such that
		\begin{enumerate}
			\item there exists $e\in X$ with $m_{2}(*, e, x)=m_{2}(*, x, e)=x$ for $x \in X, *=K_{2}$.
			\item For $\rho \in K_{r}, \sigma \in K_{s}, r+s=i+1$, we have
			$$
			\begin{aligned}
				m_{i}\left(\partial_{k}(r, s)(\rho, \sigma), x_{1}, \cdots, x_{i}\right)= 
				m_{r}\left(\rho, x_{1}, \cdots, x_{k-1}, m_{s}\left(\sigma, x_{k}, \cdots, x_{k+s-1}\right), x_{k+s}, \cdots, x_{i}\right).
			\end{aligned}
			$$
			\item For $\tau \in K_{i}$ and $i>2$, we have
			$$
			m_{i}\left(\tau, x_{1}, \cdots, x_{j-1}, e, x_{j+1}, \cdots, x_{i}\right)=m_{i-1}\left(s_{j}(\tau), x_{1}, \cdots, x_{j-1}, x_{j+1}, \cdots, x_{i}\right).
			$$
		\end{enumerate}
		The pair $(X,\{m_i\}_{2\leq i\leq n})$ is called an $A_n$ space. \\
		If the maps $m_i$ exist for all $i$, then it is called an $A_\infty$ form, and the corresponding pair is called an $A_\infty$ space.
	\end{defi}
	Homotopy associative algebras (or $A_\infty$ algebras), $A_\infty$ spaces, and operads have been extensively studied. The interested reader is directed to the excellent books \cite{May, Boardman, Adams} and introductory notes \cite{Keller}.
	
	The correspondence between the faces of Stasheff complexes (associahedra) and the bracketings indicate that these complexes can also be defined as follows.
	\begin{defi}[Associahedron]\label{def:Associahedra1}
		Let $\mathfrak{P}(n)$ be the poset of bracketings of a word with $n$ letters, ordered such that $p < p^{\prime}$ if $p$ is obtained from $p^{\prime}$ by adding new brackets. The associahedron $K_{n}$ is a convex polytope of dimension $n-2$ whose face poset is isomorphic to $\mathfrak{P}(n)$.
	\end{defi}
	\noindent
	This construction of the polytope $K_{n}$ was first given in 1984 by Haiman in his (unpublished) manuscript \cite{Haiman}. In 1989, C. Lee \cite[Theorem 1]{Lee} proved this by considering the collection of all sets of mutually non-crossing diagonals of a polygon. 
	Observe that the sets of mutually non-crossing diagonals of an $(n+1)$-gon are in bijective correspondence with the bracketings of a word with $n$ letters.
	We will use this description later in \S \ref{StasMulti}.

	\subsection{Loday's Cone Construction}\label{LodayCone}
	
	From the combinatorial description given by Stasheff, the associahedron $K_n$ is a complex of dimension $n-2$ whose vertices are in bijective correspondence with the $(n-2)$-bracketing of the word $x_1x_2\ldots x_n$.
	But each $(n-2)$-bracketing of the word $x_1x_2\ldots x_n$ corresponds to a rooted planar binary tree with $n+1$ leaves, one of them being the root. For example, the planar rooted trees associated with $x_1(x_2(x_3x_4))$ and  $(x_1x_2)(x_3x_4)$ are depicted below (cf. Figure \ref{fig:tree1}, \ref{fig:tree2}), the root being represented by the vertical leaf in each case. 
	
	\begin{figure}[H]
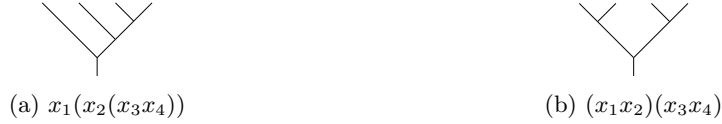

		\centering
		\begin{subfigure}[b]{0.45\linewidth}
			\centering
			\scalebox{0.25}{\tree{[[[[[][,tier=L]][,tier=L]][,tier=L]]]}}
			\subcaption{$x_1(x_2(x_3x_4))$}
			\label{fig:tree1}
		\end{subfigure}
		\begin{subfigure}[b]{0.45\linewidth}
			\centering
			\scalebox{0.25}{\tree{[[[[][,tier=L]][~,phantom,fit=band][[][,tier=L]]]]}}
			\subcaption{$(x_1x_2)(x_3x_4)$}
			\label{fig:tree2}
		\end{subfigure}
		\caption{Correspondence between bracketing and rooted binary tree}
	\end{figure}
	
	\noindent Thus $K_n$ can also be thought of as a complex of dimension $n-2$ whose vertices are in bijective correspondence with planar rooted binary trees with $n$ leaves and $1$ root.
	Let $Y_{n}$ be the set of such trees. The trees are depicted below for $2\leq n\leq 4$.\\
	$$
	Y_{2}=
	\left\{
	\begin{matrix}
		\scalebox{0.3}{\tree{[[[][]]]}}
	\end{matrix}
	\right\},\
	Y_{3}=
	\left\{
	\begin{matrix}
		\scalebox{0.2}{\tree{[[[[][,tier=L]][,tier=L]]]}}, \scalebox{0.2}{\tree{[[[,tier=L][[,tier=L][]]]]}}
	\end{matrix}
	\right\},\
	Y_{4}=
	\left\{
	\begin{matrix}
		\scalebox{0.15}{\tree{[[[,tier=L][[,tier=L][[,tier=L][]]]]]}}, \scalebox{0.20}{\tree{[[[[][]][[][]]]]}}, \scalebox{0.15}{\tree{[[[,tier=L][[[][,tier=L]][,tier=L]]]]}}, \scalebox{0.15}{\tree{[[[[,tier=L][[,tier=L][,tier=L]]][,tier=L]]]}}, \scalebox{0.15}{\tree{[[[[[][,tier=L]][,tier=L]][,tier=L]]]}}
	\end{matrix}
	\right\}
	$$
	\vspace{2pt}
	
	Any $t \in Y_{n}$ is defined to have \textit{degree} $n$. We label the leaves (not the root) of $t$ from left to right by $0, 1, \cdots, n-1$. Then we label the internal vertices by $1,2, \cdots, n-1$.
	The $i$-th internal vertex is the one that falls in between the leaves $i-1$ and $i$. We denote by $a_{i}$, respectively $b_{i}$, the number of leaves on the left side, respectively right side, of the $i$-th vertex.
	The product $a_{i} b_{i}$ is called the \textit{weight} of the $i$-th internal vertex. To each tree $t\in Y_{n}$, we associate the point $M(t) \in \mathbb{R}^{n-1}$, whose $i$-th coordinate is the weight of the $i$-th vertex:
	$$
	M(t)=\left(a_{1} b_{1}, \cdots, a_{i} b_{i}, \cdots, a_{n-1} b_{n-1}\right) \in \mathbb{R}^{n-1}
	$$
	
	\noindent For instance,\vspace*{0.2cm}
	$$\begin{aligned}
		&M\left(
		\begin{matrix}
			\scalebox{0.2}{\tree{[[[][]]]}}
		\end{matrix}
		\right)=(1),\ 
		M\left(
		\begin{matrix}
			\scalebox{0.18}{\tree{[[[[][,tier=L]][,tier=L]]]}}
		\end{matrix}
		\right)=(2,1),\ 
		M\left(
		\begin{matrix}
			\scalebox{0.18}{\tree{[[[,tier=L][[,tier=L][]]]]}}
		\end{matrix}
		\right)=(1,2),\\[5pt]
		&M\left(
		\begin{matrix}
			\scalebox{0.13}{\tree{[[[,tier=L][[,tier=L][[,tier=L][]]]]]}}
		\end{matrix}
		\right)=(1,2,3),\ 
		M\left(
		\begin{matrix}
			\scalebox{0.18}{\tree{[[[[][]][[][]]]]}}
		\end{matrix}
		\right)=(1,4,1)
	\end{aligned}$$
	
	\noindent Observe that the weight of a vertex depends only on the sub-tree that it determines. Using these integral coordinates, Loday \cite{Loday1} gave a convex realization of $K_{n+1}$ in $\mathbb{R}^n$.
	
	\begin{lemma}{\cite[Lemma 2.5]{Loday1}}
		For any tree $t \in Y_{n+1}$ the coordinates of the point $M(t)=\left(x_{1}, \cdots, x_{n}\right) \in \mathbb{R}^{n}$ satisfy the relation $$\sum_{k=1}^{n} x_{k}=\textstyle{\frac{1}{2} n(n+1)} .$$ 
		Thus, it follows that 
		$$M(t) \in H_n=\left\{(x_1,...,x_n)\in \mathbb{R}^n :x_1+x_2+...+x_n= \textstyle{\frac{n(n+1)}{2}}\right\}.$$
	\end{lemma}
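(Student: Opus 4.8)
The plan is to bypass any coordinate-by-coordinate manipulation and instead prove the equivalent purely combinatorial identity $\sum_{k=1}^{n} a_k b_k = \binom{n+1}{2}$, since $t \in Y_{n+1}$ has $n+1$ leaves and $n$ internal vertices. The first step is to reinterpret each summand. The $k$th internal vertex splits the leaves descending from it into $a_k$ leaves in its left child-subtree and $b_k$ leaves in its right child-subtree, so the product $a_k b_k$ counts exactly the number of unordered pairs of distinct leaves $\{\ell, \ell'\}$ for which one member lies in the left subtree of the $k$th vertex and the other lies in its right subtree; equivalently, those pairs whose least common ancestor is precisely the $k$th vertex.

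The key step is then the bijection furnished by the least common ancestor map. Since $t$ is a rooted binary tree, any two distinct leaves have a unique least common ancestor, which is some internal vertex, and relative to that vertex exactly one leaf of the pair lies in the left subtree and the other in the right subtree. Hence the contributions $a_k b_k$ partition the set of all pairs of distinct leaves, each pair being counted exactly once across the sum. This gives $\sum_{k=1}^{n} a_k b_k = \binom{n+1}{2} = \tfrac{1}{2}n(n+1)$, which is the asserted relation, and membership $M(t) \in H_n$ follows at once.

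An alternative route is induction on $n$ by grafting at the root. Writing the root vertex as the $p$th internal vertex, its left and right child-subtrees belong to $Y_p$ and $Y_q$ with $p + q = n+1$. Here one invokes the remark that the weight of a vertex depends only on the subtree it determines, so the weights of vertices internal to each child-subtree are unchanged in the ambient tree; the induction hypothesis then contributes $\tfrac{(p-1)p}{2}$ and $\tfrac{(q-1)q}{2}$, while the root vertex contributes $pq$. Summing and simplifying $\tfrac{(p-1)p}{2} + \tfrac{(q-1)q}{2} + pq = \tfrac{(p+q)(p+q-1)}{2}$ recovers $\tfrac{n(n+1)}{2}$.

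I expect the only genuinely substantive point to be the bijection itself (or, in the inductive version, the already-recorded fact that a vertex's weight depends only on its subtree); everything else is routine algebra. The bijective argument is the one I would write up, since it explains conceptually why the target value is the triangular number $\binom{n+1}{2}$, whereas the inductive version, though equally valid, merely verifies the identity rather than illuminating it.
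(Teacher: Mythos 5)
Your proof is correct, and on both of its tracks. A caveat on context first: the paper you were proving this in does not actually prove the lemma --- it is quoted verbatim from Loday with the citation \cite[Lemma 2.5]{Loday1}, the only proof-adjacent content being the sentence just above it, ``the weight of a vertex depends only on the sub-tree that it determines.'' That observation is precisely what licenses your inductive variant, and indeed your second argument is essentially Loday's original proof: decompose $t\in Y_{n+1}$ as the grafting $t = t^l\vee t^r$ with $t^l\in Y_p$, $t^r\in Y_q$, $p+q=n+1$, note the root has weight $pq$ while all other weights are inherited from the subtrees, and sum $\frac{(p-1)p}{2}+\frac{(q-1)q}{2}+pq=\frac{(p+q)(p+q-1)}{2}=\frac{n(n+1)}{2}$ (you omit the trivial base case $Y_2$, where the single vertex has weight $1$, but that is harmless). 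Your primary argument --- that $a_kb_k$ counts the unordered pairs of leaves whose least common ancestor is the $k$th internal vertex, so that the weights partition all $\binom{n+1}{2}$ leaf pairs --- is a genuinely different, bijective route not taken by Loday or by this paper, and it buys something real: it explains \emph{why} the invariant is the triangular number rather than merely verifying it, and it makes the independence from the shape of $t$ transparent rather than an artifact of algebraic cancellation. Two details worth making explicit in a write-up: the LCA of two distinct leaves is always an \emph{internal} vertex with the two leaves in different child subtrees (this needs the tree to be binary, as you use), and the paper's convention that the root is not counted among the $n+1$ leaves, which both of your counts respect.
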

	
	\begin{thm}{\cite[Theorem 1.1]{Loday1}}
		The convex hull of the points $M(t) \in \mathbb{R}^{n}$, for $t\in Y_{n+1}$, is a realization of the Stasheff complex $K_{n+1}$ of dimension $n-1$.
	\end{thm}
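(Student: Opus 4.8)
The plan is to give an explicit inequality description of the candidate polytope, identify its vertices with the points $M(t)$, and then match its face lattice with the bracketing poset $\mathfrak{P}(n+1)$, so that the conclusion follows from Definition \ref{def:Associahedra1}. For each pair $1\le i\le j\le n$ I would write $\ell_{i,j}(x)=\sum_{k=i}^{j}x_k$, and let $P\subset H_n$ be the set of points of $H_n$ satisfying $\ell_{i,j}(x)\ge\binom{j-i+2}{2}$ for all such $(i,j)$. The first step is to show $M(t)\in P$ for every $t\in Y_{n+1}$. The key observation is that $\ell_{i,j}(M(t))=\sum_{k=i}^{j}a_kb_k$ counts exactly those pairs of leaves of $t$ whose meet (lowest common ancestor) is one of the internal vertices $i,\dots,j$: each pair of leaves is separated at a unique internal vertex, so $a_kb_k$ is the number of pairs split at vertex $k$. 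Since every pair among the $j-i+2$ consecutive leaves $i-1,i,\dots,j$ is split at some vertex in $\{i,\dots,j\}$, one gets $\ell_{i,j}(M(t))\ge\binom{j-i+2}{2}$, with equality precisely when these leaves span a single subtree of $t$. Taking $(i,j)=(1,n)$ recovers the preceding Lemma and confirms that $P$ sits in the ambient dimension $n-1$.

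Next I would identify the vertices of $P$ with the $M(t)$. A binary tree $t\in Y_{n+1}$ has $n$ internal vertices and hence $n$ subtrees; the root yields the tight equality $\ell_{1,n}=\binom{n+1}{2}$ defining $H_n$, while the remaining $n-1$ proper subtrees yield $n-1$ proper intervals $[i,j]$ on which, by the equality criterion above, $\ell_{i,j}(M(t))$ is tight. A short check shows the corresponding $n-1$ linear forms are independent on $H_n$, so $M(t)$ is a vertex of $P$. Conversely, I would argue that every vertex of $P$ is cut out by a maximal family of simultaneously tight inequalities, and that the equality criterion forces the associated intervals to be pairwise nested or disjoint, i.e.\ non-crossing; such a maximal non-crossing family is the bracket set of a complete bracketing, and solving the resulting equalities returns a point $M(t)$. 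This would show that $\{M(t):t\in Y_{n+1}\}$ is exactly the vertex set of $P$, so $P=\operatorname{conv}\{M(t)\}$.

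The decisive step, and where I expect the main obstacle to lie, is promoting this vertex correspondence to an isomorphism of posets. To a nonempty face of $P$ I would associate the set $S$ of interval inequalities it makes tight, and to $S$ the bracketing of $x_1\cdots x_{n+1}$ whose brackets are exactly the intervals of $S$. Three things must be verified: that a family $S$ of intervals is simultaneously tight on some face if and only if $S$ is non-crossing (equivalently, a genuine bracketing); that the assignment is injective, so distinct bracketings give distinct faces; and that the face determined by $S$ has dimension $(n-1)-|S|$, matching the number of brackets added. The first two points amount to a careful planarity/compatibility analysis of which partial-sum constraints can be active together, and the dimension count follows from the independence of the active forms together with the full-dimensionality established above. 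The resulting map is then an order isomorphism from the face poset of $P$ onto $\mathfrak{P}(n+1)$, in which adding a bracket corresponds to passing to a facet; by Definition \ref{def:Associahedra1} this identifies $P=\operatorname{conv}\{M(t)\}$ with the associahedron $K_{n+1}$ of dimension $n-1$, as claimed.
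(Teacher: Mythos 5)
The paper itself does not prove this statement: it is imported verbatim as \cite[Theorem 1.1]{Loday1}, so the only meaningful comparison is with Loday's original argument. Your outline is essentially a reconstruction of that argument: Loday also realizes the hull as the intersection of $H_n$ with the half-spaces $x_i+\cdots+x_j\ge\binom{j-i+2}{2}$, proves exactly your counting lemma (the pairs of leaves split at a given internal vertex) together with the equality criterion (equality iff the leaves $i-1,\dots,j$ span a subtree), and then matches facets with the products $K_p\times_r K_q$, arguing by induction on $n$ rather than by classifying all tight families at once as you do. Your route is correct in substance, but two points need tightening before it closes. First, ``pairwise nested or disjoint'' is not yet ``non-crossing'': under the dictionary sending the vertex-interval $[i,j]$ to the bracket $(x_i\cdots x_{j+1})$, two \emph{disjoint but adjacent} intervals $[i,j]$ and $[j+1,j']$ correspond to brackets overlapping in the single letter $x_{j+1}$, which is not a legal bracketing, so adjacency must be excluded as well. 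Both crossing and adjacency are killed by the same strict supermodularity computation: if $[i,j]$ and $[i',j']$ with $i\le i'\le j+1\le j'$ were simultaneously tight, then from $\ell_{i,j}+\ell_{i',j'}=\ell_{i,j'}+\ell_{i',j}$ (with the convention $\ell_{i',j}\equiv 0$ when $i'=j+1$) and convexity of $\binom{x}{2}$ one gets $\ell_{i,j'}+\ell_{i',j}<\binom{j'-i+2}{2}+\binom{j-i'+2}{2}$, e.g.\ in the adjacent case the deficit is $(p-1)(q-1)>0$ for $p=j-i+2$, $q=j'-j+1$, contradicting membership in $P$. Second, to conclude $P=\operatorname{conv}\{M(t)\}$ and get the full poset isomorphism you must also record that $P$ is bounded (each $x_k\ge 1$ with fixed coordinate sum, so $P$ is the hull of its vertices) and that every non-crossing family $S$ is \emph{achieved} as the exact tight set of a nonempty face of the expected dimension --- for instance, the barycenter of $\{M(t): t \text{ refines } S\}$ is tight precisely on $S$, which simultaneously shows each interval inequality is facet-defining. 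With these additions your independence count for laminar families (indicator vectors of a laminar family are linearly independent) completes the identification of the face poset with $\mathfrak{P}(n+1)$, and the theorem follows from Definition \ref{def:Associahedra1} exactly as you say.
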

	\noindent For example, the complex $K_5$ lies in the hyperplane $H_4$ in $\R^4$. Under an isometric transformation of $H_4$ to $\R^3$ (i.e., $x_4=0$ hyperplane), the embedded picture of $K_5$ is shown in Figure \ref{fig:embeddedK5}.
	\begin{figure}[H]
		\centering
		\includegraphics[width=0.3\linewidth]{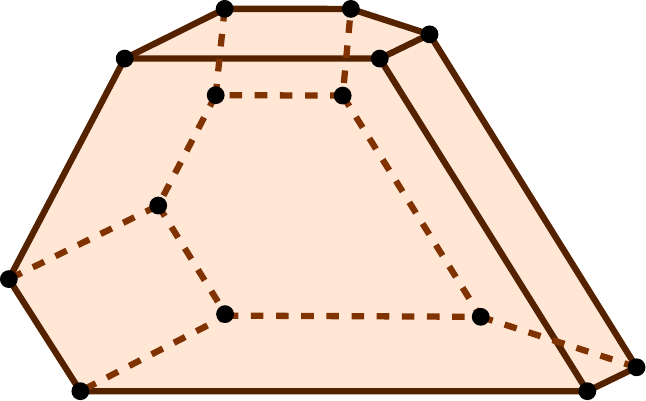}
		\caption{Loday's embedded $K_5$ in $\R^3$}
		\label{fig:embeddedK5}
	\end{figure}
	
	Now starting with $K_2$ as a point, Loday \cite[\S 2.4]{Loday2} gave a different inductive construction of the complexes $K_{n+1}$. The steps are as follows: 
	
    \begin{enumerate}
		\item Start with the associahedron $K_n$, which is a topological ball with the cellular sphere as the boundary. The cells of the boundary are of the form $K_p\times_r K_q$ where $p+q=n+1$ and $r=1,2,...,p$.  
		
		\item Enlarge each cell $K_p\times_r K_q$ of the the boundary of $K_n$ into a cell of dimension $n$ by replacing it with $K_{p+1}\times_r K_q$ keeping the adjacency of the cells intact. Explicitly, suppose two cells $K_p\times_r K_q$ and $K_{p'}\times_{r'} K_{q'}$ are adjacent with a common boundary sub-cell $K_a\times_j(K_b\times_l K_c)$ or $(K_a\times_j K_b)\times_l K_c$ (that are the only possibilities, check identification (\ref{eq:1}), (\ref{eq:2}) in \S\ref{Stasheff}) on the boundary of $K_n$. Then the cells $K_p\times_r K_q$ and $K_{p'}\times_{r'} K_{q'}$ are enlarged to $K_{p+1}\times_r K_q$ and $K_{p'+1}\times_{r'} K_{q'}$ so that they share the common enlarged boundary sub-cell $K_{a+1}\times_j(K_b\times_l K_c)$ or $(K_{a+1}\times_j K_b)\times_l K_c$. We denote the total enlarged complex by $\widehat{K}_n$. 
        Topologically, one may think this process of enlargement as a quotient space as follows.
        \begin{gather*}
            \widehat{K}_n=\Big(K_n\bigsqcup_{(p,q,r)\in V_n} (K_{p+1}\times_r K_q)\Big)\Big/\sim, \text{ where}\\
            V_n=\{(p,q,r)\in \N^3:  p,q\geq 2; p+q=n+1; 1\leq r\leq p\} \text{ and}\\
            \quad \partial(K_n)\ni K_p\times_r K_q\sim  (K_2 \times_1 K_p)\times_r K_q \in \partial(K_{p+1}\times_r K_q)\text{ with identification (\ref{eq:1}), (\ref{eq:2}) on} \\
            \partial(K_{p+1}\times_r K_q)\text{ and } \partial(K_{p'+1}\times_{r'} K_{q'}) \text{ for all }(p,q,r), (p',q',r')\in V_n. \vspace*{-0.2cm}	
        \end{gather*}	
		\item Take the topological cone over the above enlargement $\widehat{K}_n$ and declare that to be $K_{n+1}$, i.e. $K_{n+1}:= C(\widehat{K}_n)=\frac{\widehat{K}_n\times [0,1]}{\widehat{K}_n\times \{1\}}$. By regarding $\big[\widehat{K}_n\times \{1\}\big]$ as the abstract cone point $x_0$ (say), one may think $[(x,t)]$ in $C(\widehat{K}_n)$ as the point $tx_0+(1-t)x$ on the segment joining $x$ to $x_0$ for $t\in[0,1]$.
	\end{enumerate}
    Note that the above construction of $K_{n+1}$ from $K_n$ does not give any convex structure to it. But embedding the enlarged complex $\widehat{K}_n$ in $\R^{n-1}$ and choosing an appropriate cone point there, it is possible to realize $K_{n+1}=C(\widehat{K}_n)$ as a convex polytope in $\R^n$. This is illustrated in the following examples in low dimensions. However, for the general cases, we shall restrict ourselves to the topological part only.
	\begin{itemize}
		\item[(i)] To construct $K_3$ from $K_2$, form the enlarged complex $\widehat{K}_2$, which is a point (as $K_2$ has no boundary). Then $K_3$ is a cone over the point $\widehat{K}_2$, i.e., an interval.
		\begin{figure}[H]
			\centering		
			\includegraphics[width=0.8\linewidth]{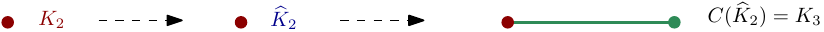}
			\caption{$K_3$ from $K_2$}
		\end{figure}
		\vspace*{-0.2cm}\item[(ii)] To construct $K_4$ from $K_3$, note that $K_3$ has two boundary points namely $K_2\times_1 K_2$ and $K_2\times_2 K_2$. Thus $\widehat{K}_3$ consists of the original $K_3$ together with $K_3\times_1 K_2$ and $K_3\times_2 K_2$, which looks like an angular `C' shape. Finally, $K_4$ is the cone over $\widehat{K}_3$, resulting in a filled pentagon. 
		\begin{figure}[H]
			\centering		
			\includegraphics[width=0.85\linewidth]{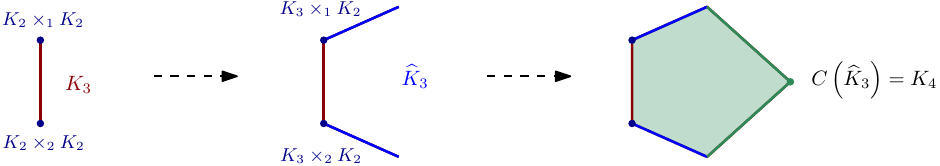}
			\caption{$K_4$ from $K_3$}
		\end{figure}	
        \vspace*{-0.2cm}\item[(iii)] To construct $K_5$ from $K_4$, see that $K_4$ has five boundary edges namely $K_2\times_1 K_3$, $K_3\times_1 K_2$, $K_3\times_3 K_2$, $K_2\times_2 K_3$ and $K_3\times_2 K_2$. Each one of these shares its two boundary endpoints with the other two. See in the below picture that $K_3\times_1 K_2$ and $K_3\times_3 K_2$ have the common boundary point $(K_2\times_2 K_2)\times_1 K_2=(K_2\times_1 K_2)\times_3 K_2$ (by (\ref{eq:2})). Similarly, others are obtained via identification (\ref{eq:1}). Thus in $\widehat{K}_4$, each of the enlarged five cells $K_3\times_1 K_3$, $K_4\times_1 K_2$, $K_4\times_3 K_2$, $K_3\times_2 K_3$ and $K_4\times_2 K_2$ shares two boundary edges (obtained as an enlargement of the common boundary points) with other two. Finally, $K_5$ is the cone over $\widehat{K}_4$. 
		\begin{figure}[H]
			\centering		
			\includegraphics[width=0.95\linewidth]{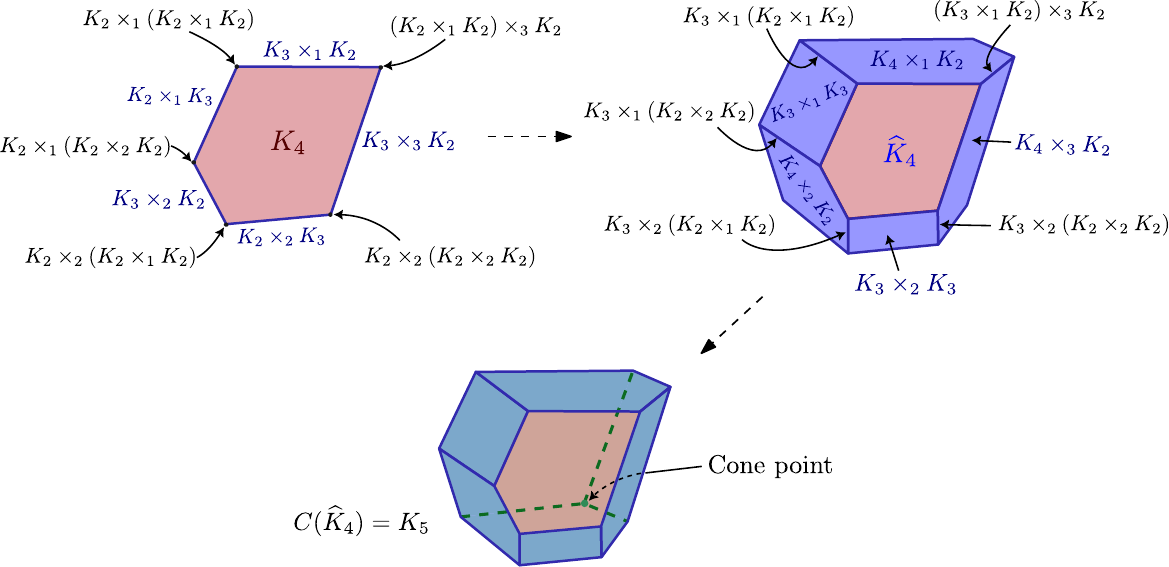}
			\caption{$K_5$ from $K_4$}
		\end{figure}	
	\end{itemize}

	\subsection{Collapsed Multiplihedra}\label{CollapsedMulti}
	Suppose $(X\{m_i\}),(Y,\{m_i^\prime\})$ are two $A_\infty$ spaces and $f: X\to Y$ is a weak homomorphism i.e., there is a homotopy between the maps $f\comp m_2$ and $m_2^\prime\comp (f\times f)$. Such maps are called $H$-maps. In general, there is a notion of $A_n$ maps in Stasheff \cite[II, Def. 4.1]{HAH}, which satisfy $f\comp m_i=m_i^\prime\comp (1\times f^i)$ for $i\leq n$.
	Thus we have a map $f_2:\mathcal{J}(2)\times X^2\to Y$, where $\mathcal{J}(2)$ is an interval. 
	To match things up, rewrite $f$ as $f_1:\mathcal{J}(1) \times X\to Y$, where $\mathcal{J}(1)$ is a single point. 
	Now using $m_2, m_2^\prime, f$, there are six different ways (cf. Figure \ref{fig:JPaintedTrees}) to define a map from $X^3$ to $Y$, namely $f\comp (m_2\comp (m_2\times 1)),$ $f\comp (m_2\comp (1\times m_2)),$ $m_2^\prime\comp (f\times m_2),$ $m_2^\prime\comp (m_2\times f),$ $m_2^\prime\comp (1\times m_2^\prime)\comp(f\times f\times f),$ $m_2^\prime\comp ( m_2^\prime \times 1)\comp(f\times f\times f)$. 
	Using the weak homomorphism of $f$ and weak associativity in $X,Y$ (due to the existence of $m_3$, $m_3^\prime$), one realizes that there are two different homotopies between any two of the six maps. If those two homotopies are homotopic themselves, then we have a map $f_3:\mathcal{J}(3)\times X^3\to Y$, where $\mathcal{J}(3)$ is a filled hexagon. 
	
	If we continue this process, we will get a map $f_n:\mathcal{J}(n)\times X^n\to Y$ for each $n\geq 1$. These complexes $\mathcal{J}(n)$ are called multiplihedra. In the Figure \ref{fig:CollapsedJ_4} below, the dark edges collapse to a point so that the rectangular faces degenerate to edges and the pentagonal face degenerates to a single point, giving rise to Loday's realization of $K_5$. There is a different degeneration from $\mathcal{J}(n)$ to $K_{n+1}$, as shown in \cite[\S 5]{SaneblidzeUmble}; Figure \ref{fig:CollapsedJ_4-1} exhibits this for $\mathcal{J}(4)$. 
	\begin{figure}[H]
		\centering
		\begin{subfigure}[b]{0.3\linewidth}
			\centering	
			\includegraphics[width=0.5 \linewidth]{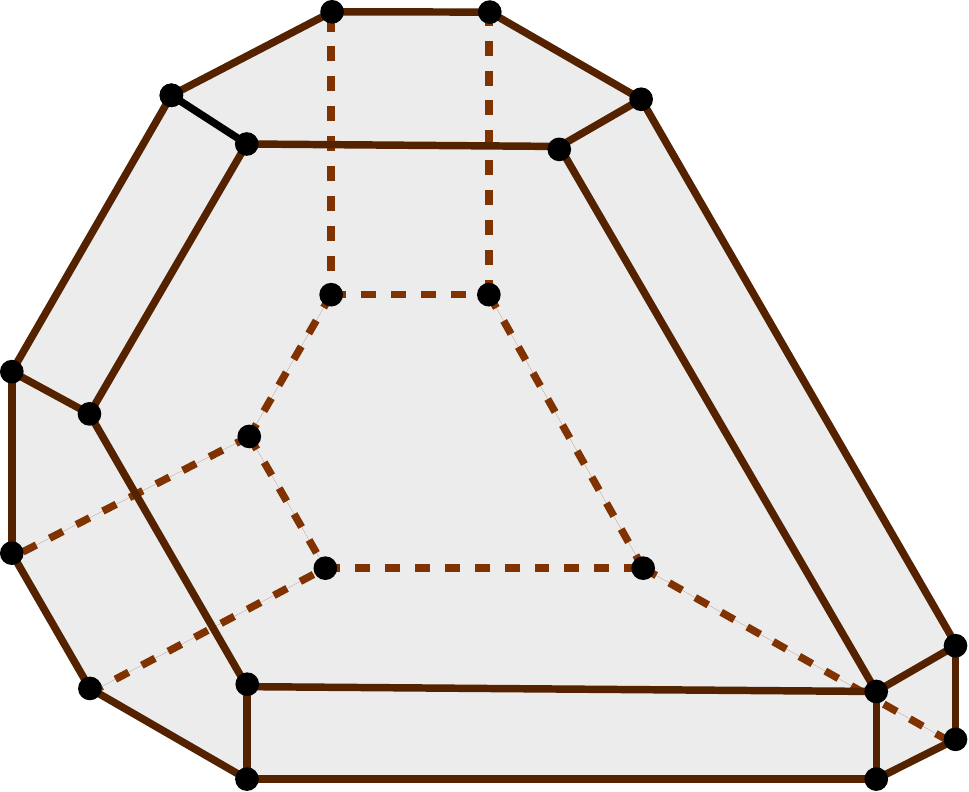}
			\caption{Embedded $\mathcal{J}(4)$ in $\mathbb{R}^3$}
			\label{fig:EmbJ(4)}
		\end{subfigure}
		\hspace{0.05cm}
		\begin{subfigure}[b]{0.3\linewidth}
			\centering	
			\includegraphics[width=0.52 \linewidth]{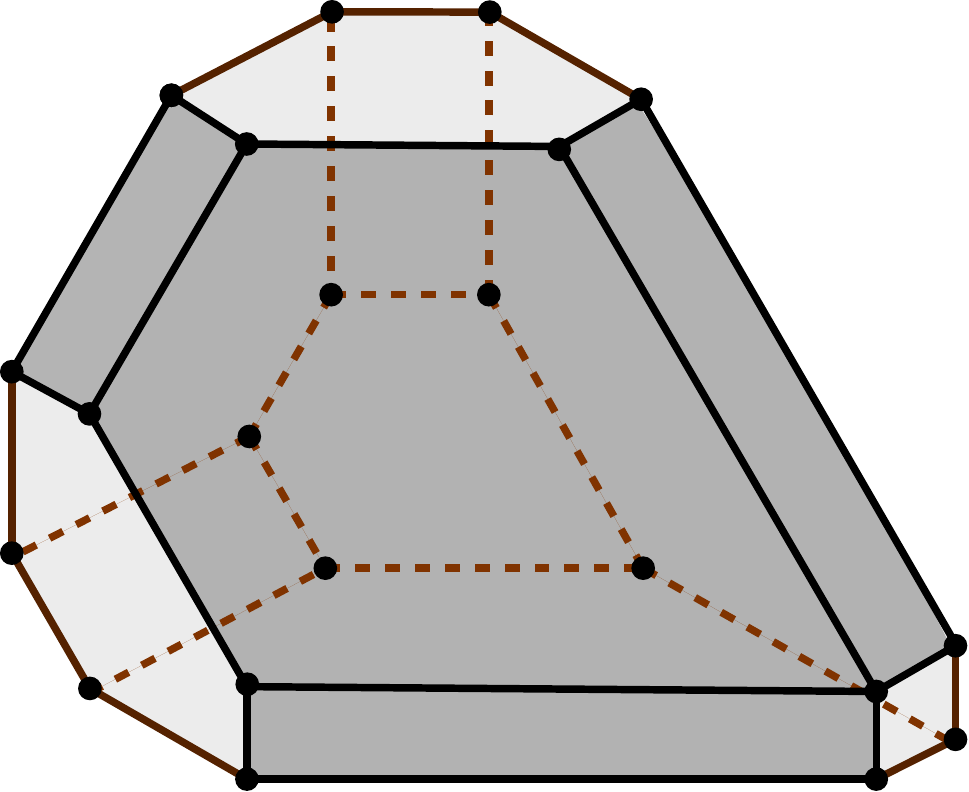}
			\caption{Shaded faces collapsed to get $K_5$}
			\label{fig:CollapsedJ_4}
		\end{subfigure}
		\hspace{0.05cm}
		\begin{subfigure}[b]{0.3\linewidth}
			\centering	
			\includegraphics[width=0.52 \linewidth]{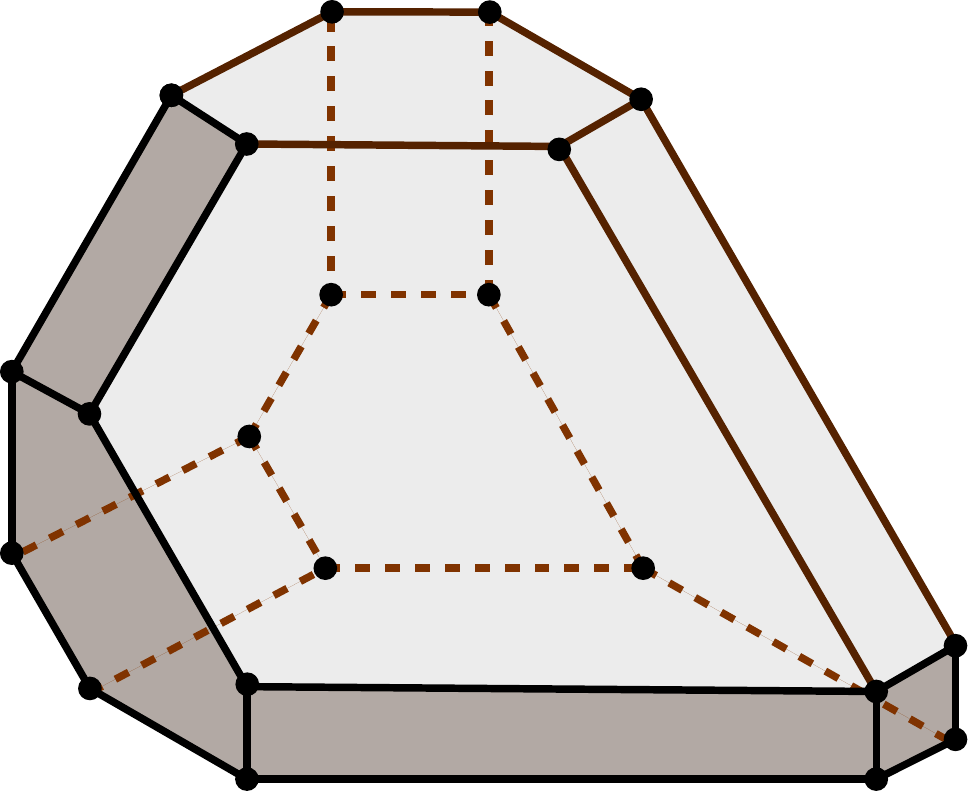}
			\caption{Another degeneration}
			\label{fig:CollapsedJ_4-1}
		\end{subfigure}
		\caption{$\mathcal{J}(4)$ and its degeneration to $K_5$}
	\end{figure}
	
	Multiplihedra first appeared in the work of Stasheff \cite{Stas}. However, in 1986, Norio Iwase and Mamoru Mimura \cite[Section 2]{IwaseMimura} gave the first detailed construction of $\mathcal{J}(n)$ with face operators, and described their combinatorial properties. It was also shown that $\mathcal{J}(n)$ is homeomorphic to the unit cube of dimension $n-1$. Using this description of $\mathcal{J}(n)$, they defined $A_n$ maps. But even before them, Boardman and Vogt \cite{Boardman} (around 1973) had developed several homotopy equivalent versions of a space of \textit{painted binary trees} with interior edges of length in $[0,1]$ to define maps between $A_\infty$ spaces which preserve the multiplicative structure up to homotopy. In 2008, Forcey \cite[Theorem 4.1]{Forcey1} proved that the space of painted trees with $n$ leaves, as convex polytopes, are combinatorially equivalent to the CW-complexes described by Iwase and Mimura. Indeed, Forcey associated a coordinate to each painted binary tree, which generalized the Loday's integer coordinates associated with binary trees corresponding to the vertices of associahedra. Figure \ref{fig:EmbJ(4)} of $\mathcal{J}(4)$ is drawn with such coordinates for the vertices. We shall use the definition of $\mathcal{J}(n)$, as defined in \cite{Forcey1}, in terms of painted trees. 
	\begin{defi}\label{def:PaintedTree}
		A \textit{painted tree} is painted beginning at the root edge (the leaf edges are unpainted), and always painted in such a way that there are only following three types of nodes:
		\begin{figure}[H]
			\centering
			\begin{subfigure}[b]{0.3\linewidth}
				\centering
				\includegraphics[width=0.45\linewidth]{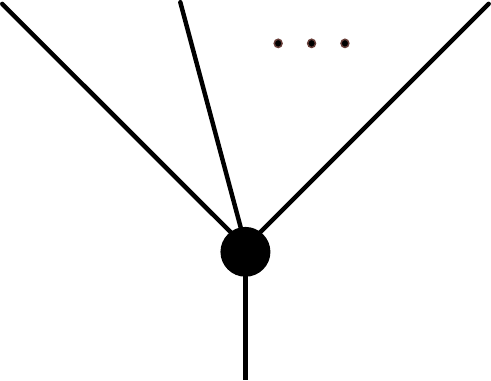}
				\subcaption{} \label{PT2_i}
			\end{subfigure}
			\hspace{0.1cm}
			\begin{subfigure}[b]{0.3\linewidth}
				\centering	\includegraphics[width=0.45\linewidth]{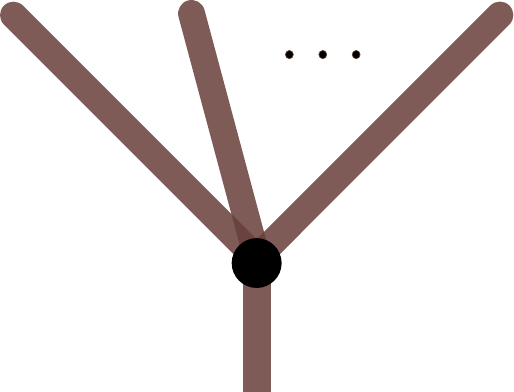}
				\subcaption{} 
				\label{PT2_ii}
			\end{subfigure}
			\hspace{0.1cm}
			\begin{subfigure}[b]{0.3\linewidth}
				\centering	\includegraphics[width=0.45\linewidth]{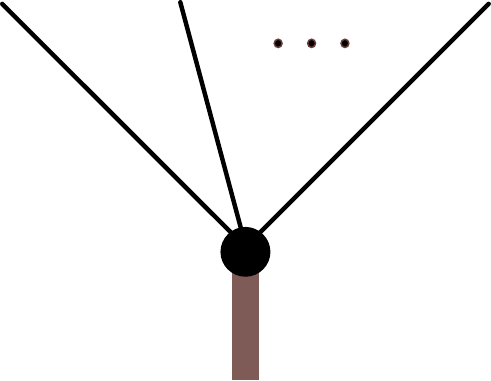}
				\subcaption{} \label{PT2_iii}
			\end{subfigure}
			\caption{Admissible nodes}
		\end{figure}
		\noindent This limitation on nodes implies that painted regions must be connected, and that painting must proceed up every branch of a node.
	\end{defi}
	Let $J(n)$ consist of all painted trees with $n$ leaves. There is a refinement ordering defined as follows.
	\begin{defi}{\cite[Definition 1]{Forcey1}}
		For $t,t^\prime\in J(n)$, we say $t$ \textit{refines} $t'$ and denote by $t\preccurlyeq t^\prime$ if $t^\prime$ obtained from $t$ by collapsing some of its internal edges.\\
		We say $t$ \textit{minimally refines} $t'$ if $t$ refines $t'$ and there is no $s\in J(n)$ such that both $t$ refines $s$ and $s$ refines $t'$.
	\end{defi}
	\noindent Now $(J(n),\preccurlyeq)$ is a poset with painted binary trees as smallest elements (in the sense that nothing refines them) and the painted corolla as the greatest element (in the sense that everything refines it). The $n$-th multiplihedra is defined as follows.
	\begin{defi}
		The $n$-th multiplihedra $\mathcal{J}(n)$ is a convex polytope whose face poset is isomorphic to the poset $(J(n),\preccurlyeq)$ of painted trees with $n$ leaves. 
	\end{defi}
	The explicit inductive construction of these polytopes and the correspondence between the facets of $\mathcal{J}(n)$ and the painted trees follows from \cite[Definition 4]{Forcey1}. For instance, the vertices of $\mathcal{J}(n)$ are in bijection with the painted binary trees with $n$ leaves; the edges are in bijection with those painted trees with $n$ leaves which are obtained by the minimal refinement of painted binary trees with $n$ leaves and they are glued together along the endpoints with matching associated to painted binary trees. In this way, the $(n-2)$-dimensional cells of $\mathcal{J}(n)$ are in bijection with those painted trees which refine to corolla with $n$ leaves. They are glued together along $(n-3)$-dimensional cells with matching to associated painted trees to form the complex $\partial \mathcal{J}(n)$. Finally the $(n-1)$ dimensional complex $\mathcal{J}(n)$ is defined as the cone over $\partial \mathcal{J}(n)$ and it corresponds to the painted corolla with $n$ leaves in the poset $J(n)$.
	\begin{figure}[H]
		\centering
		\includegraphics[width=0.4\linewidth]{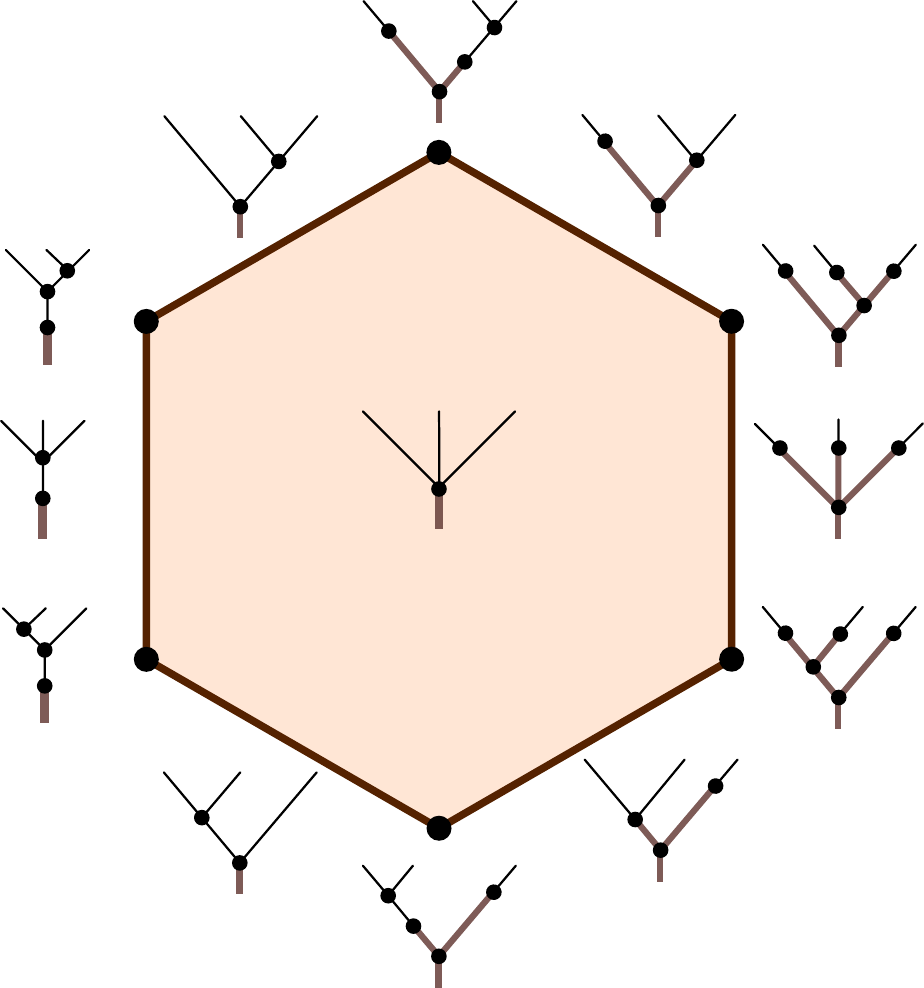}
		\caption{$\mathcal{J}(3)$ labelled by painted trees}
		\label{fig:JPaintedTrees}
	\end{figure}
	\indent We shall give an equivalent description of $\mathcal{J}(n)$ which reflects the promised representation of it stated at the beginning of this subsection. It is given as follows.
	Let $f:A\to B$ be a weak homomorphism (i.e., respects the multiplication in $A$ and $B$ up to homotopy) from an $A_\infty$ space to another $A_\infty$ space. For a given ordered collection $a_1,a_2,...,a_n\in A$, there are three types of elements.
	\begin{enumerate}
		\item[I.] The $f$-image of the elements, which are obtained using different associations of the elements $a_1,a_2,...,a_n$ in $A$. For example, $f(X),$ where $X$ is some rule of association of the elements $a_1,a_2,...,a_n$. 
		\item[II.] The elements obtained using $f$ being homomorphism up to homotopy on the elements of type I and following the same association rule in $B$. For example, if $X=(X_1)((X_2)(X_3))$ is some rule of association of $a_1,a_2,...,a_n$, then elements of the form $f((X_1)\cdot((X_2)(X_3)))$ is of this type. Here $f((X_1)\cdot((X_2)(X_3)))$ denotes the homotopy equivalence between $f((X_1)((X_2)(X_3)))$ and $f(X_1)f((X_2)(X_3))$. Similarly, $f((X_1)\cdot ((X_2)\cdot (X_3)))$, representing the homotopy equivalence between $f((X_1)(X_2\cdot X_3))$ and $f(X_1)f((X_2)\cdot (X_3))$, is also of this type.
		\item[III.] The elements that are obtained using different associations of the elements of type II in $B$. For example, if $X=(X_1)((X_2)((X_3)(X_4)))$ is some rule of association of $a_1,a_2,...,a_n$, then the elements obtained using the different association of $f(X_1), f(X_2), f(X_3), f(X_4)$ in $B$, namely \vspace*{-0.2cm}
		\begin{align*}
			(f(X_1)f(X_2))(f(X_3)f(X_4)),\ f(X_1)f(X_2)&(f(X_3)f(X_4)),\ f(X_1)(f(X_2)f(X_3))f(X_4),\\ (f(X_1)(f(X_2)f(X_3)))f(X_4),&\ 
			f(X_1)f(X_2)f(X_3)f(X_4)
		\end{align*} are of this type. \vspace*{-0.2cm}
	\end{enumerate}
	
	\begin{defi}
		Let $\mathfrak{J}_n$ be the poset of all of the above three types of elements in $B$, ordered such that $P\prec P'$ if $P$ is obtained from $P'$ by at least one of the following operations:
		\begin{enumerate}
			\item \label{it:op1} adding brackets in domain or co-domain elements.
			\item \label{it:op2} replacing `$\cdot$' by $)f($ without changing the association rule in $P'$.
			
			\item \label{it:op3} removing one or more consecutive `$\cdot$' by adding a pair of brackets that encloses all the adjacent elements to all those `$\cdot$' which are removed. In this process, ignore redundant bracketing (if obtained). The requirement of consecutive `$\cdot$' is to ensure allowable bracketing.
		\end{enumerate}
	\end{defi}
	The above operations are to be understood in the following ways:
	\begin{itemize}
		\item For two type I (or III) elements $P,P'$, we say $P\prec P'$ if $P,P'$ follow above operation (\ref{it:op1}) in domain (or co-domain). 
		For example, $f(a_1(a_2(a_3a_4)))\prec f(a_1(a_2a_3a_4)),$ $f(a_1)(f(a_2)f(a_3a_4))\prec f(a_1)f(a_2)f(a_3a_4)$.
		
		\item  For two type II elements $Q,Q'$, we say $Q\prec Q'$ if $Q,Q'$ follow above operation (\ref{it:op2}) or (\ref{it:op3}). 
		For example, $f(a_1)f(a_2\cdot (a_3a_4))\prec f(a_1\cdot a_2\cdot (a_3a_4)),$ $f(a_1\cdot (a_2(a_3a_4)))\prec f(a_1\cdot (a_2\cdot (a_3a_4))).$  
		
		\item For type I element $P$ and type II element $Q$, we say $P\prec Q$ if $P,Q$ follow above operation (\ref{it:op3}).
		
		For example, $f((a_1a_2)(a_3a_4)) \prec f((a_1a_2) \cdot (a_3a_4)),$ $f(a_1a_2a_3a_4) \prec f(a_1\cdot a_2\cdot a_3\cdot a_4)$. 
		
		\item For type II element $Q$ and type III element $P$, we say $P\prec Q$ if $P,Q$ follow above operation (\ref{it:op2}) or (\ref{it:op3}).
		For example, $(f(a_1)f(a_2a_3))f(a_4)\prec f(a_1\cdot (a_2a_3))f(a_4),$ $f(a_1)(f(a_2a_3)f(a_4))\prec f(a_1)(f(a_2\cdot a_3)f(a_4))$.	 
	\end{itemize}
	Now, depending on the poset $(\mathfrak{J}_n,\prec)$, we define another set of complexes $J_n$ for $n\geq 1$.
	\begin{defi}\label{def:Multiplihedra}
		Define $J_n$ to be the convex polytope of dimension $n-1$, whose face poset is isomorphic to $(\mathfrak{J}_n,\prec)$ for $n\geq 1$.
	\end{defi}
	The existence and the equivalence of these complexes with the multiplihedra follows from the following lemma. 
	\begin{lemma}\label{lem:ConRealMulti}
		$J_n$ is isomorphic to the multiplihedron $\mathcal{J}(n)$ for any $n\geq 1$. 
	\end{lemma}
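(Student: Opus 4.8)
The plan is to reduce the claimed isomorphism of polytopes to a single isomorphism of posets. Recall that a convex polytope is determined up to combinatorial isomorphism by its face poset, and that $\mathcal{J}(n)$ is, by its definition, a convex polytope whose face poset is $(J(n),\preccurlyeq)$, while $J_n$ is declared to be a convex polytope whose face poset is $(\mathfrak{J}_n,\prec)$. Hence it suffices to construct an order isomorphism
$$\Phi\colon(\mathfrak{J}_n,\prec)\longrightarrow (J(n),\preccurlyeq).$$
This one construction settles everything: once $(\mathfrak{J}_n,\prec)\cong(J(n),\preccurlyeq)$, the polytope $\mathcal{J}(n)$ is itself a convex realization of the poset $(\mathfrak{J}_n,\prec)$, so one may take $J_n:=\mathcal{J}(n)$, which simultaneously guarantees the existence of $J_n$ (as a convex polytope of dimension $n-1$) and exhibits the desired isomorphism $J_n\cong\mathcal{J}(n)$.

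The map $\Phi$ is given by a dictionary between the algebraic expressions making up $\mathfrak{J}_n$ and the painted trees making up $J(n)$, read on the ordered leaves $a_1,\dots,a_n$. An association performed inside an innermost $f$ (a bracketing of domain elements) is recorded by an \emph{unpainted} branching; an association performed on $f$-images in the codomain is recorded by a \emph{painted} branching; each application of $f$, i.e.\ each matched pair $f(\cdots)$, is recorded by a bivalent transition node, below which the tree is painted and above which it is unpainted; and each homotopy symbol ``$\cdot$'' records that the corresponding codomain product is still \emph{undistributed}, which combinatorially means that the painting front meets the associated branching node rather than splitting into separate transitions. Under this dictionary the three classes of elements correspond to three strata of painted trees: type I elements $f(X)$ (painting localized to a single transition just above the root, everything above unpainted), type III elements (fully distributed $f$'s with genuine painted branchings and transitions pushed toward the leaves), and type II elements (those carrying at least one ``$\cdot$'', i.e.\ at least one undistributed painting front). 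The painted corolla, the maximal painted tree, corresponds to $f(a_1\cdot a_2\cdots a_n)$, the unique maximal type II expression.

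I would then check that $\Phi$ is a bijection by writing down the inverse translation explicitly and verifying the two assignments are mutually inverse; this is routine parsing once the dictionary above is fixed. The substantive step is order-compatibility: I must show that $P\prec P'$ in $\mathfrak{J}_n$ if and only if $\Phi(P')$ is obtained from $\Phi(P)$ by collapsing internal edges, and it suffices to verify this on covering relations, matching each of the three defining operations to a single internal-edge collapse. Operation (\ref{it:op1}) (adding a bracket in the domain or codomain) corresponds to collapsing an internal edge joining two nodes of the same colour; operation (\ref{it:op2}) (replacing ``$\cdot$'' by ``$)f($'') corresponds to collapsing the edge that merges a transition node into an adjacent painted branching, thereby un-distributing one application of $f$ into a painting front; and operation (\ref{it:op3}) (removing consecutive ``$\cdot$'' and enclosing the affected factors in a bracket) corresponds to collapsing the edge between a transition node and the unpainted branching directly above it, pushing the painting front upward. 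A grading check, comparing the dimension $n-1-(\text{number of internal edges of }\Phi(P))$ with the degree of resolution of $P$, serves as a useful consistency test throughout.

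The main obstacle I anticipate is precisely this order-compatibility bookkeeping for the type II elements, where the two associahedral structures---bracketings in the domain $A$ and bracketings in the codomain $B$---interact through $f$. Concretely, I must verify that the minimal refinements of a painted tree are in exact correspondence with the single applications of operations (\ref{it:op1})--(\ref{it:op3}): that these three operations are mutually exclusive, that together they exhaust every internal-edge collapse, and that the delicate undistributed (``$\cdot$'') configurations behave correctly under both operation (\ref{it:op2}) and operation (\ref{it:op3}). Handling the painting-front nodes of valency $\geq 3$---the combinatorial incarnation of the homotopy symbols, which appear only in the higher faces and not among the binary (vertex) trees---and confirming that the greatest elements on the two sides match, is where the argument requires the most care.
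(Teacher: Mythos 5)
Your proposal takes essentially the same route as the paper's proof: both reduce the claim to an isomorphism of posets (using that the polytopes are defined by their face posets) and both use the identical dictionary --- unpainted branchings for domain bracketings, painted branchings for codomain bracketings, bivalent paint-to-unpaint transitions for applications of $f$, and paint fronts meeting branchings of valency at least three for the homotopy symbols ``$\cdot$'' --- with order-compatibility verified on covering relations by matching operations (\ref{it:op1})--(\ref{it:op3}) against edge collapses (the paper defines the map in the direction $J(n)\to\mathfrak{J}_n$, which is immaterial).

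One concrete correction to your bookkeeping: it is not true that every covering relation in $(J(n),\preccurlyeq)$ is a \emph{single} internal-edge collapse. For operation (\ref{it:op2}), collapsing only the edge between one transition node and the painted branching below it produces a node at which the painting ends on one upward branch but continues up the others, which is not one of the three admissible node types, so the result is not even an element of $J(n)$. The minimal refinement realizing operation (\ref{it:op2}) must collapse the whole bunch of painted edges above that branching simultaneously, merging \emph{all} of its transition nodes into a single paint-front node, as in passing from the tree for $f(Y_1)f(Y_2)$ to the tree for $f(Y_1\cdot Y_2)$. This is exactly why the paper's proof phrases the relevant case as collapsing ``a bunch of painted edges'' rather than one. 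You flag this region as delicate, and with this adjustment (covering relations are single unpainted-edge collapses, single painted-edge collapses, or simultaneous bunches of painted-edge collapses) your verification goes through and coincides with the paper's.
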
 
	\begin{proof}
		It follows from the definitions of $\mathcal{J}(n)$ and $J_n$ that to exhibit an isomorphism between the mentioned complexes, it is enough to provide an isomorphism at the poset level. Define a map $\Phi: J(n)\to \mathfrak{J}_n$ as follows.
		\begin{itemize}
			\item[i)] Put $a_1$ through $a_n$ from left to right above the leaves of a painted tree.
			\item[ii)] If the leaves corresponding to $a_k$ through $a_l$ for $1\leq k< l\leq n$ are joined to a node of type \ref{PT2_i} or of type \ref{PT2_iii}, then associate $(a_ka_{k+1}\ldots a_l)$ (cf. Figure \ref{PT3_i}) or $f(a_k\cdot a_{k+1}\cdot \ldots \cdot a_l)$ (cf. Figure \ref{PT3_iii}) respectively to that node. In case $1\leq k=l\leq n$, then associate $f(a_k)$ to the corresponding node. 
			\item[iii)] Then proceed to the nodes just below the above ones. If a node is of type \ref{PT2_i} or \ref{PT2_iii} joining $X_1$ through $X_m$ as associated nodes just above, then associate $(X_1X_2\ldots X_m)$ or $f(X_1\cdot X_2\cdot \ldots\cdot X_m)$ respectively to that node. If a node is of type \ref{PT2_ii} joining $f(Y_1)$ through $f(Y_m)$ as associated nodes just above, then associate $(f(Y_1)f(Y_2)\ldots f(Y_m))$ to that node (cf. Figure \ref{PT3_ii}).
			\item[iv)] Continue the above step iii) till the root node of a painted tree.
		\end{itemize} 
		\begin{figure}[H]
			\centering
			\begin{subfigure}[b]{0.3\linewidth}
				\centering					
				\includegraphics[width=0.57\linewidth]{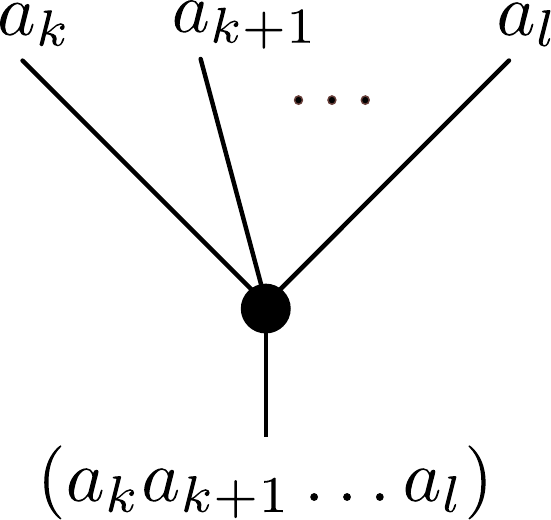}
				\caption{} \label{PT3_i}
			\end{subfigure}
			\hspace{0.1cm}
			\begin{subfigure}[b]{0.3\linewidth}
				\centering	
				\includegraphics[width=0.8\linewidth]{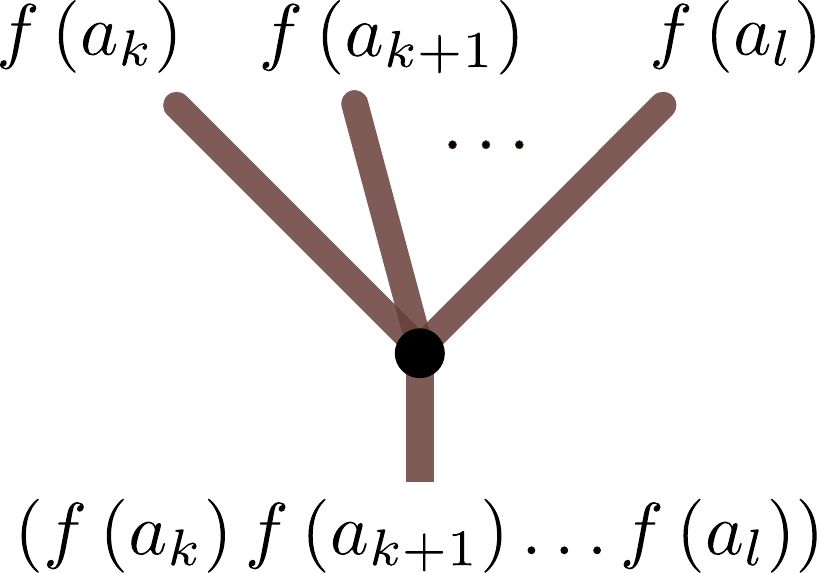}
				\caption{} \label{PT3_ii}
			\end{subfigure}
			\hspace{0.1cm}
			\begin{subfigure}[b]{0.3\linewidth}
				\centering	
				\includegraphics[width=0.64\linewidth]{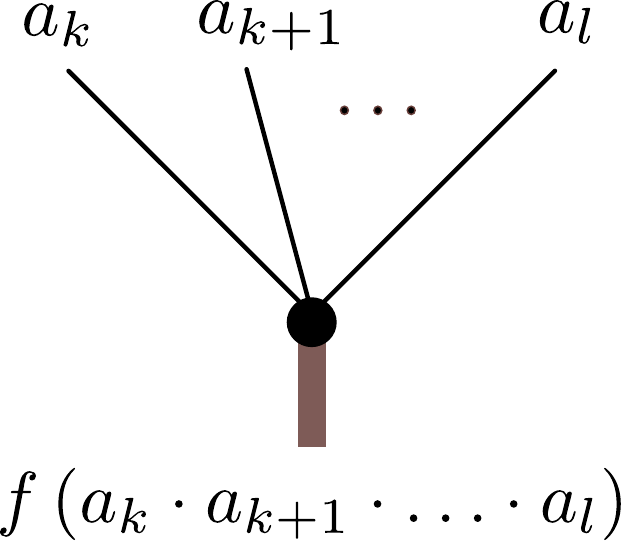}
				\caption{} \label{PT3_iii}
			\end{subfigure}
			\caption{Bijection between the nodes of painted tree and the elements of defined poset}
		\end{figure}
		The element (ignoring redundant brackets, if exist) associated to the root node of a painted tree $t\in J(n)$, is defined to be $\Phi(t)\in \mathfrak{J}_n$. For example, the $\Phi$-image of the painted tree $t\in J(5)$ in Figure \ref{fig:Bijection_eg} is $f(a_1a_2)(f(a_3)f(a_4\cdot a_5))\in \mathfrak{J}_5$.\\
		Note that each painted tree is uniquely determined by its nodes and each position of those nodes associates a unique element. Also, the image of $t\in J(n)$ under $\Phi$ is determined by the associated elements to the nodes of $t$. Thus $\Phi$ maps each element of $J(n)$ to a unique element of $\mathfrak{J}_n$ and hence $\Phi$ is a bijection.
		\begin{figure}[H]
			\centering
			\includegraphics[width=0.35\linewidth]{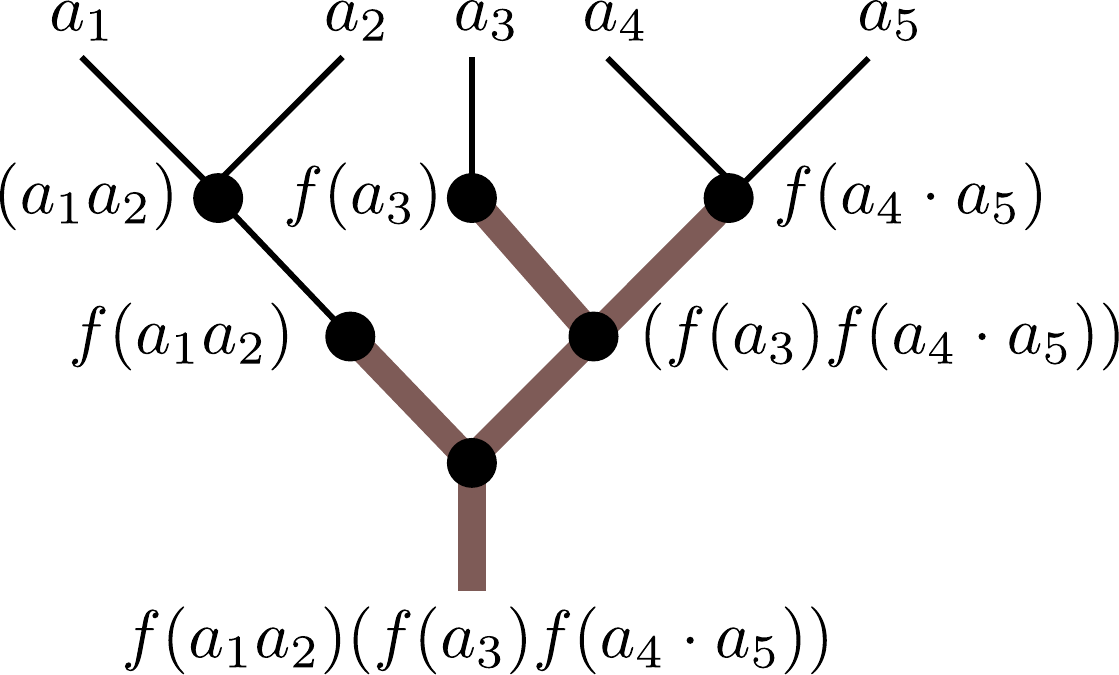}
			\caption{Elements associated to the nodes}
			\label{fig:Bijection_eg}
		\end{figure}
		It remains to check that $\Phi$ preserves the partial order. By the definition of $\preccurlyeq$, it is enough to show that $\Phi(t)\prec \Phi(t')$ when $t\preccurlyeq t'$ minimally. If $t\preccurlyeq t'$ minimally, then $t'$ is obtained from $t$ by collapsing an unpainted internal edge or a painted internal edge or a bunch of painted edges. Note that collapsing an unpainted internal edge results in either the removal of brackets in the domain (operation (\ref{it:op1}) in $\mathfrak{J}_n$) or the addition of one or more $\cdot$ by removing brackets (operation (\ref{it:op3}) in $\mathfrak{J}_n$). Collapsing a painted internal edge results in the removal of brackets in the co-domain (operation (\ref{it:op1}) in $\mathfrak{J}_n$) while collapsing a bunch of painted edges results in replacing $)f($ by $\cdot$ (operation (\ref{it:op2}) in $\mathfrak{J}_n$). In all the cases $\Phi(t)\prec \Phi(t')$, completing the proof.
	\end{proof}
	\noindent Using this lemma, we consider $J_n$ (Definition \ref{def:Multiplihedra}) as the $n$-th multiplihedron. The pictures of $J_1,$ $J_2,$ $J_3$ are depicted later in Figure \ref{fig:multiplihedra}, with labelling of the faces in terms of elements of $\mathfrak{J}(1)$, $\mathfrak{J}(2)$, $\mathfrak{J}(3)$ respectively.  

	\begin{figure}[h]
		\centering
		\begin{subfigure}[b]{0.08\linewidth}
			\centering
			\includegraphics[width=1.25cm]{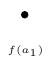}
			\caption{$J_1$}
		\end{subfigure}
		\hspace{0.5cm}
		\begin{subfigure}[b]{0.13\linewidth}
			\includegraphics[width=\linewidth]{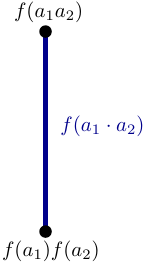}
			\caption{$J_2$}
		\end{subfigure}
		\hspace{1cm}
		\begin{subfigure}[b]{0.5\linewidth}
			\includegraphics[width=\linewidth]{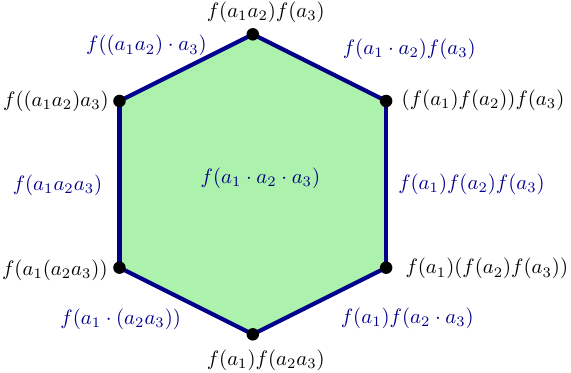}
			\caption{$J_3$}
		\end{subfigure}
		\caption{Multiplihedra}
		\label{fig:multiplihedra}
	\end{figure}
 
	Now suppose $B$ is an associative space. Due to the associativity in $B$, there will be only one element of type III (as defined before) for each association rule of $a_1,a_2,...,a_n.$ For example, if $X=((X_1X_2)(X_3X_4))$ is some association rule of $a_1,a_2,...,a_n$, then there is only one element $f(X_1)f(X_2)$ $f(X_3)f(X_4)$ in $B$ using the fact that $f$ is a homomorphism up to homotopy. We will call them degenerate type III elements. 
	\begin{defi}\label{def:CollMulti}
		Let $\mathfrak{J}'_n$ be the poset of all type I, type II, and degenerate type III elements in $B$ with the ordering induced from $(\mathfrak{J}'_n,\prec)$. 
		We define the \textit{collapsed multiplihedron} $J'_n$ to be a cellular complex of dimension $n-1$, whose face poset is isomorphic to $\mathfrak{J}'_n.$ 
	\end{defi} 
	\noindent As the posets $\mathfrak{J}'_n$ are obtained by the degeneracy of certain elements in $\mathfrak{J}_n$, the complexes $J'_n$ are obtained by collapsing certain faces of $J_n$. Thus the existence of the complexes $J'_n$ is guaranteed by the existence of multiplihedron $J_n$. We will use this definition to show that $J'_n$ is combinatorially isomorphic to the associahedron $K_{n+1}$ in \S \ref{StasMulti}.

	\subsection{Graph Cubeahedra and Design Tubings}\label{Cubeahedra}
	Devadoss \cite{Devadoss1} gave an alternate definition of $K_{n}$ with respect to tubings on a path graph.
	\begin{defi}[Tube]
		Let $\Gamma$ be a graph. A \textit{tube} is a proper nonempty set of nodes of $\Gamma$ whose induced graph is a proper, connected subgraph of $\Gamma$. 
	\end{defi}	
	There are three ways that two tubes $t_1$ and $t_2$ may interact on the graph. 
	\begin{itemize}
		\item $t_1$ and $t_2$ are \textbf{nested} if $t_1\subset t_2$ or $t_2\subset t_1$. 
		\begin{figure}[H]
			\centering
			\includegraphics[scale=0.7]{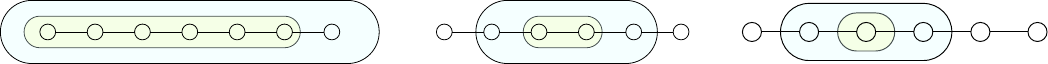}
			\caption{Nested tubes}
		\end{figure}
		\item $t_1$ and $t_2$ \textbf{intersect} if $t_1\cap t_2\neq \phi$ and $t_1\nsubseteq t_2$ and $t_2\nsubseteq t_1$. 
		\begin{figure}[H]
			\centering
			\begin{subfigure}[b]{0.375\linewidth}
				\centering
				\includegraphics[width=0.6\linewidth]{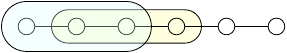}
			\end{subfigure}
			\hspace{0.5cm}
			\begin{subfigure}[b]{0.32\linewidth}
				\includegraphics[width=0.6\linewidth]{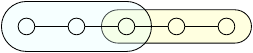}
			\end{subfigure}
			\caption{Intersection of tubes}
		\end{figure}
		\item $t_1$ and $t_2$ are \textbf{adjacent} if $t_1\cap t_2= \phi$ and $t_1\cup t_2$ is a tube. 
		\begin{figure}[H]
			\centering
			\begin{subfigure}[b]{0.325\linewidth}
				\centering
				\includegraphics[width=0.8\linewidth]{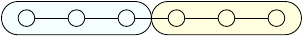}
			\end{subfigure}
			\hspace{0.5cm}
			\begin{subfigure}[b]{0.32\linewidth}
				\includegraphics[width=0.8\linewidth]{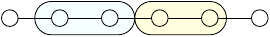}
			\end{subfigure}
			\caption{Adjacent tubes}
		\end{figure}
	\end{itemize}	
	Two tubes are \textbf{compatible} if they are neither adjacent nor intersect i.e., $t_1$ and $t_2$ are compatible if they are nested or $t_1\cap t_2= \phi$ with $t_1\cup t_2$ are not tubes.
	\begin{defi}
		A \textit{tubing} $T$ of $\Gamma$ is a set of tubes of $\Gamma$ such that every pair of tubes in $T$ is compatible. A \textit{$k$-tubing} is a tubing with $k$ tubes. 
	\end{defi}
	\noindent A few examples of tubings are given below.
	\begin{figure}[H]
		\centering
		\begin{subfigure}[b]{0.29\linewidth}
			\centering
			\includegraphics[width=\linewidth]{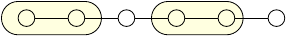}
			\caption*{2-tubing}
		\end{subfigure}
		\hspace{0.5cm}
		\begin{subfigure}[b]{0.29\linewidth}
			\includegraphics[width=\linewidth]{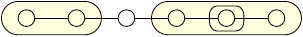}
			\caption*{3-tubing}
		\end{subfigure}
		\hspace{0.5cm}
		\begin{subfigure}[b]{0.29\linewidth}
			\includegraphics[width=\linewidth]{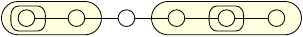}
			\caption*{4-tubing}
		\end{subfigure}
		\caption{Tubings}
	\end{figure}
	
	If we think of the $n-1$ nodes of a path graph $\Gamma$ as dividers between the $n$ letters of a word and the tube as a pair of parentheses enclosing the letters, then the compatibility condition of the tubes corresponds to the permissible bracketing of the word. Now using the combinatorial description (cf. Definition \ref{def:Associahedra1}) of $K_n$, one has the following result.
	\begin{lemma}{\cite[Lemma 2.3]{Devadoss2}}
		\,Let $\Gamma$ be a path graph with $n-1$ nodes. The face poset of $K_n$ is isomorphic to the poset of all valid tubings of $\Gamma$, ordered such that tubings $T\prec T'$ if $T$ is obtained from $T'$ by adding tubes.
	\end{lemma}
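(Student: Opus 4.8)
The plan is to exhibit an explicit order-isomorphism between the poset of tubings of the path graph $\Gamma$ and the bracketing poset $\mathfrak{P}(n)$; the lemma then follows immediately from Definition \ref{def:Associahedra1}, which identifies the face poset of $K_n$ with $\mathfrak{P}(n)$. Label the $n-1$ nodes of $\Gamma$ as $v_1,\ldots,v_{n-1}$ in path order, and think of $v_i$ as the divider between the letters $x_i$ and $x_{i+1}$ of the word $x_1x_2\cdots x_n$. Because every connected set of nodes in a path is an interval $\{v_i,v_{i+1},\ldots,v_j\}$, I would define a map $\beta$ sending such a tube to the pair of parentheses enclosing the consecutive letters $x_ix_{i+1}\cdots x_{j+1}$. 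Properness and nonemptiness of tubes guarantee that $\beta$ is a bijection from the tubes of $\Gamma$ onto the brackets that enclose at least two letters but not the whole word.

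The core of the argument is that compatibility of tubes translates exactly into the non-crossing condition for brackets. I would run through the three interaction types: nested tubes map to nested brackets; two intersecting tubes (sharing a node, neither containing the other) map to two brackets whose letter-intervals overlap in at least two letters with neither nested, i.e.\ a crossing pair; and two adjacent tubes (disjoint nodes but connected union) map to two brackets sharing exactly one boundary letter, again a crossing. Hence the two forbidden interactions, intersecting and adjacent, are precisely the two ways two brackets may cross, whereas the two compatible configurations, nested or disjoint-with-disconnected-union, correspond exactly to nesting and to side-by-side brackets over disjoint letter ranges. Therefore a set of tubes is pairwise compatible if and only if $\beta$ carries it to a well-formed non-crossing set of brackets, so $T\mapsto \beta(T)$ is a bijection between tubings of $\Gamma$ and elements of $\mathfrak{P}(n)$.

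It then remains to check that $\beta$ respects the two orders. Adjoining a single tube to a tubing $T$ corresponds under $\beta$ to inserting a single meaningful bracket into $\beta(T)$, so $T\prec T'$ (that is, $T$ arises from $T'$ by adding tubes) holds exactly when $\beta(T)<\beta(T')$ in $\mathfrak{P}(n)$ (that is, $\beta(T)$ arises from $\beta(T')$ by adding brackets). As sanity checks, the empty tubing maps to the empty bracketing, the top element corresponding to all of $K_n$, while a maximal tubing has $n-2$ tubes and maps to a complete bracketing, i.e.\ a vertex; the matching of cardinalities and codimensions is consistent with $\dim K_n = n-2$. Combining the bijection with this order-compatibility yields the required poset isomorphism.

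I expect the principal obstacle to lie in the case analysis of the second paragraph, where one must confirm that the intersecting and adjacent interactions together capture exactly the crossing pairs of brackets and nothing more. Particular care is needed for the extreme cases: single-node tubes, which correspond to the length-two brackets $(x_ix_{i+1})$, and the excluded full tube, which would correspond to the redundant outer bracket around the entire word that $\mathfrak{P}(n)$ does not record. Once these are settled, the bijection and the verification of order-preservation are routine.
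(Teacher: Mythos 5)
Your proposal is correct and follows essentially the same route the paper indicates: the paper justifies this cited lemma by exactly the divider--parenthesis correspondence (nodes of the path as dividers between letters, tubes as brackets, compatibility as non-crossing), which you have merely carried out in full detail. Your case analysis of the three tube interactions and the handling of the boundary cases (single-node tubes and the excluded full tube versus the redundant outer bracket) are accurate, so the elaboration is sound.
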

	\noindent On a graph, Devadoss \cite{Devadoss1} defines another set of tubes called design tubes. 
	\begin{defi}[Design Tube]
		Let $G$ be a connected graph.
		A \textit{round tube} is a set of nodes of $G$ whose induced graph is a connected (and not necessarily proper) subgraph of $G$.
		A \textit{square tube} is a single node of $G$. Then round tubes and square tubes together called \textit{design tubes} of $G$.
	\end{defi}
	
	\noindent Two design tubes are compatible if
	\begin{enumerate}
		\item they are both round, they are not adjacent and do not intersect;
		\item otherwise, they are not nested.
	\end{enumerate}
	
	\begin{defi}[Design Tubing]
		A \textit{design tubing} $U$ of $G$ is a collection of design tubes of $G$ such that every pair of tubes in $U$ is compatible.
	\end{defi}
	
	\begin{figure}[H]
		\centering
		\begin{subfigure}[b]{0.29\linewidth}
			\centering
			\includegraphics[width=0.8\linewidth]{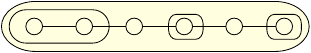}
			\caption*{4-design tubing}
		\end{subfigure}
		\hspace{0.5cm}
		\begin{subfigure}[b]{0.29\linewidth}
			\includegraphics[width=\linewidth]{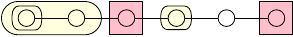}
			\caption*{5-design tubing}
		\end{subfigure}
		\hspace{0.5cm}
		\begin{subfigure}[b]{0.29\linewidth}
			\includegraphics[width=\linewidth]{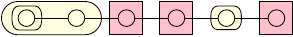}
			\caption*{6-design tubing}
		\end{subfigure}
		\caption{Design tubings}
	\end{figure}
	\noindent
	Note that, unlike ordinary tubes, round tubes do not have to be proper subgraphs of $G$. \\
	\hspace*{0.5cm}Based on design tubings, Devadoss \cite{Devadoss1} constructed a set of polytopes called graph cubeahedra. For a graph $G$ with $n$ nodes, define $\boxdot_G$ to be the \textit{$n$-cube} where each pair of opposite facets corresponds to a particular node of $G$. Specifically, one facet in the pair represents that node as a round tube and the other represents it as a square tube.
	Each subset of nodes of $G$, chosen to be either round or square, corresponds to a unique face of $\boxdot_G$ defined by the intersection of the faces associated with those nodes. The empty set corresponds to the face which is the entire polytope $\boxdot_G$.
	
	\begin{defi}[Graph Cubeahedron]
		For a graph $G$, truncate faces of $\boxdot_G$ which correspond to round tubes in increasing order of dimension. The resulting polytope $\mathcal{C}G$ is the \textit{graph cubeahedron}.
	\end{defi}
	\noindent The graph cubeahedron $\mathcal{C}G$ can also be described as a convex polytope whose face poset formed through the design tubings. 
	\begin{thm}{\cite[Theorem 12]{Devadoss1}}\label{thm:OrderCubeahedra}
		For a graph $G$ with $n$ nodes, the graph cubeahedron $\mathcal{C} G$ is a simple convex polytope of dimension $n$ whose face poset is isomorphic to the set of design tubings of $G$, ordered such that $U \prec U^{\prime}$ if $U$ is obtained from $U^{\prime}$ by adding tubes. 
	\end{thm}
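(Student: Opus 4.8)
The plan is to track how the face poset of the cube $\boxdot_G$ evolves under the prescribed sequence of truncations and to match the result with the poset of design tubings. First I would record the face structure of $\boxdot_G$ itself: by construction a face is obtained by assigning to each node of $G$ one of the states \emph{round}, \emph{square}, or \emph{free}, so faces correspond to pairs of disjoint node sets (the round and the square nodes), the whole cube being the all-free face and a facet being a single node declared round or square. For a round tube $t$ let $F_t$ be the face in which every node of $t$ is round and all others free; then $\dim F_t = n-|t|$ and $F_t\cap F_{t'}=F_{t\cup t'}$, the face with $t\cup t'$ round, regardless of whether $t\cup t'$ is itself a tube. These $F_t$ are exactly the faces the construction cuts, in increasing order of dimension, i.e.\ largest tubes first.

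Next I would invoke the standard local description of truncation for a simple polytope: slicing off a face $F$ of a simple $n$-polytope by a generic hyperplane yields a simple $n$-polytope in which $F$ is replaced by a new facet combinatorially isomorphic to $F\times\Delta^{\,n-\dim F-1}$, while faces disjoint from $F$ persist unchanged. Since $\boxdot_G$ is simple of dimension $n$, an induction over the truncation steps shows that $\mathcal{C}G$ is simple of dimension $n$, which is the first assertion. The facets of $\mathcal{C}G$ are then precisely the $n$ surviving square facets, which I label by the square tubes $\{v\}$, together with, for each round tube $t$, the new facet $\widehat F_t\cong F_t\times\Delta^{\,|t|-1}$, which I label by the round tube $t$ (for $|t|=1$ the cut is trivial and $\widehat F_t$ is the surviving round facet). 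Thus facets of $\mathcal{C}G$ biject with design tubes, and a codimension-$k$ face is an intersection of $k$ facets; the theorem reduces to showing that such an intersection is nonempty exactly when the corresponding $k$ design tubes are pairwise compatible.

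The heart of the argument is a \emph{separation principle} for iterated truncation: if two cut faces have $F_t\cap F_{t'}=F_{t\cup t'}$ with $F_{t\cup t'}$ itself among the cut faces and $F_t,F_{t'}$ not nested, then the slice introduced at $F_{t\cup t'}$ pulls the new facets apart, so $\widehat F_t\cap\widehat F_{t'}=\emptyset$; whereas when the faces are nested, or when their meet is not among the cut faces, the intersection persists. I would establish this by a local analysis of each slicing step, processing tubes in increasing dimension (equivalently decreasing size), so that when $F_t$ is cut every strictly larger tube containing $t$ or meeting $t$ in a tube has already been handled. Feeding the four cases of design compatibility through this principle gives an exact match: nested round tubes $t\subsetneq t'$ correspond to nested faces $F_{t'}\subsetneq F_t$ and stay compatible; round tubes that intersect or are adjacent have $t\cup t'$ connected, hence a tube already cut, so they are separated; disjoint round tubes whose union is disconnected have $F_{t\cup t'}$ never cut, so they persist; a square tube $\{v\}$ and a round tube $t$ persist iff $v\notin t$, since for $v\in t$ the round and square facets at $v$ are opposite facets of $\boxdot_G$ and are already disjoint (the nested case); and two square facets always meet. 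This is exactly the compatibility of design tubes.

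I expect the main obstacle to be a fully rigorous proof of the separation principle together with the bookkeeping that keeps the iterated truncation well defined: one must confirm that each face scheduled to be cut is still a face of the current polytope when its turn arrives, that the increasing-dimension order makes the outcome independent of the choices made within a fixed dimension, and that the star of each $F_t$ behaves as the expected product $F_t\times\Delta^{\,|t|-1}$, so that both simplicity and the claimed incidences are preserved at every stage. Once the intersection pattern is pinned down, the order statement is immediate: passing to a subface adjoins one facet, i.e.\ one design tube, so $U\prec U'$ precisely when $U$ is obtained from $U'$ by adding tubes, with the empty tubing labelling the whole polytope and the maximal (binary) tubings labelling the vertices.
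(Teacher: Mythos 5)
The first thing to note is that the paper contains no proof of this statement at all: it is imported verbatim from \cite[Theorem 12]{Devadoss1} and used as a black box (only the poset description feeds into Proposition \ref{thm:MultiCubea}), so your proposal can only be measured against the original argument of Devadoss--Heath--Vipismakul, which --- like the proof of the analogous statement for graph associahedra in \cite{Devadoss2} --- proceeds by exactly the iterated-truncation analysis you outline. Your setup is sound and matches that route: faces of $\boxdot_G$ are round/square/free assignments, i.e.\ pairs of disjoint node sets; $F_t$ has codimension $|t|$ and $F_t\cap F_{t'}=F_{t\cup t'}$; truncating a face of a simple polytope preserves simplicity and dimension and creates a facet of the form $(\text{trimmed copy of } F_t)\times\Delta^{|t|-1}$ (your $F_t\times\Delta^{|t|-1}$ should be the already-trimmed copy, a slip you implicitly acknowledge); and your case analysis of pairs --- nested round tubes persist, round tubes with $t\cup t'$ a tube are separated by the earlier cut at $F_{t\cup t'}$, disjoint round tubes with disconnected union persist since $F_{t\cup t'}$ is never cut, a square node inside a round tube is separated because the two facets of $\boxdot_G$ at that node are opposite, and all other square/round and square/square pairs persist --- reproduces design-tube compatibility exactly.

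The genuine gap is the sentence reducing the theorem to pairwise intersections of facets. In a simple $n$-polytope every nonempty intersection of $k$ distinct facets is a face of codimension $k$, so the face poset is indeed the poset of facet sets with nonempty common intersection ordered by reverse inclusion; but it is \emph{not} a general fact that facets meeting pairwise meet globally. Flagness of the nerve of the facets of $\mathcal{C}G$ is part of the content of the theorem --- it is what guarantees, for instance, that every maximal pairwise-compatible design tubing with $n$ tubes actually labels a vertex --- and your separation principle, which only ever compares two cut faces at a time, cannot deliver it by itself. The standard repair, and in effect what the source does, is to strengthen the induction you already gesture at: maintain, after all scheduled faces of dimension $<d$ have been truncated, an explicit isomorphism between the face poset of the intermediate polytope and a poset of mixed labels (design tubes already cut, together with round/square/free data on the remaining nodes), and verify that a single generic slice transforms one such isomorphism into the next. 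Carried out, this simultaneously settles the bookkeeping issues you list (each scheduled face survives until its turn, independence of the order within a fixed dimension, the product structure of the star of $F_t$), yields simplicity and flagness, and makes the final order statement --- adding a tube passes to a subface --- immediate. As a blueprint your proposal is the right one; as written, the step from pairwise to arbitrary families of design tubes is asserted rather than proved, and it is precisely the point where the inductive poset-tracking is unavoidable.
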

	\noindent In this article, we are interested in the case when $G$ is a path graph. We will make use of the above theorem to show a combinatorial isomorphism between $\mathcal{C}G$ for $G$ is a path graph with $n$ nodes and multiplihedra $J_{n+1}$ in \S \ref{MultiCubea}.
	
	\section{Isomorphisms Between The Four Models}\label{Main}
	
	We prove the main result of this paper in this section. 
	\begin{thm}\label{thm:Main}
		The four models of associahedra: Stasheff complexes, cellular complexes obtained by Loday's cone construction, collapsed multiplihedra, and graph cubeahedra for path graphs are all combinatorially isomorphic.
	\end{thm}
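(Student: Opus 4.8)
The plan is to deduce the four-fold equivalence from transitivity, since \emph{combinatorially isomorphic} means \emph{having isomorphic face posets}, and poset isomorphism is an equivalence relation. Thus it suffices to establish the three links
\[
\text{Loday's cone construction} \;\cong\; \text{Stasheff polytopes} \;\cong\; \text{collapsed multiplihedra} \;\cong\; \text{graph cubeahedra for path graphs},
\]
which are exactly the assertions of Theorem \ref{thm:iStasheffLoday}, Observation \ref{thm:iStasMulti}, and Observation \ref{thm:iMultiCubea}. The only bookkeeping required before composing is to align the indexing so that, in each fixed dimension, the four models describe cell complexes of the same dimension: writing $K_{n+1}$ for the associahedron (face poset $\mathfrak{P}(n+1)$, dimension $n-1$), the matching partners are the collapsed multiplihedron $J'_n$ and the graph cubeahedron of the path graph on $n-1$ nodes, all of dimension $n-1$. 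Once the three order-isomorphisms are in hand, composing them yields an order-isomorphism among all four face posets, which is precisely the claimed combinatorial isomorphism.

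For the two links built from bijections of combinatorial labels I would argue entirely at the poset level. For Stasheff $\cong$ collapsed multiplihedra (Observation \ref{thm:iStasMulti}) I would use the bracketing / non-crossing-diagonal description of $K_{n+1}$ from Definition \ref{def:Associahedra1} and send each partial bracketing to the matching type I, type II, or degenerate type III element of $\mathfrak{J}'_n$, then check that the covering relation ``adding one bracket'' in $\mathfrak{P}(n+1)$ corresponds to one of the elementary moves (\ref{it:op1})--(\ref{it:op3}) defining $\prec$; the painted-tree dictionary $\Phi$ of Lemma \ref{lem:ConRealMulti} makes it routine to read off which element a bracketing determines. For collapsed multiplihedra $\cong$ graph cubeahedra (Observation \ref{thm:iMultiCubea}) I would translate an element of $\mathfrak{J}'_n$ into a design tubing of the path graph by reading the gaps between the letters as nodes, each pair of brackets as a round tube on the enclosed nodes, and each application of $f$ to a single factor as a square tube, and then verify against Theorem \ref{thm:OrderCubeahedra} that ``adding a tube'' matches $\prec$. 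In both cases the substance is checking that the bijection and its inverse preserve the order, which reduces to a finite case analysis of the covering moves.

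The step I expect to be the main obstacle is the link Loday $\cong$ Stasheff (Theorem \ref{thm:iStasheffLoday}), because the two models realize $K_{n+1}$ as cones over \emph{different} boundary complexes. Stasheff cones over $L_{n+1}$, assembled from the cells $K_p\times_r K_q$ with $p+q=n+2$, whereas Loday starts from $K_n$ and first enlarges each of its boundary cells $K_p\times_r K_q$ (with $p+q=n+1$) to $K_{p+1}\times_r K_q$, forming $\widehat{K}_n$, before coning. I would resolve this by induction on $n$: assuming the two constructions agree in all lower degrees, I would set up a cell-by-cell correspondence sending each enlarged cell $K_{p+1}\times_r K_q$ of $\widehat{K}_n$ to the facet of $K_{n+1}$ carrying the bracketing that encloses $q$ consecutive letters starting at position $r$, and the original copy of $K_n$ together with the cone apex to the remaining facets. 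The delicate point is to confirm that overlaps are glued consistently: the shared lower-dimensional faces must be identified in the same way by both schemes, which is exactly what the Stasheff compatibility relations \eqref{eq:1} and \eqref{eq:2} encode, so the verification amounts to checking that the enlargement-then-cone operation respects these two identifications. Granting this, transitivity of combinatorial isomorphism completes the proof of the theorem.
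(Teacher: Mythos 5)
Your proposal reproduces the paper's proof: Theorem \ref{thm:Main} is established there exactly by composing the three links of Theorem \ref{thm:StasheffLoday}, Proposition \ref{thm:StasMulti}, and Proposition \ref{thm:MultiCubea}, and your sketches of each link (partial bracketings mapped to elements of $\mathfrak{J}'_n$, elements of $\mathfrak{J}'_{n+1}$ read off as design tubings with round tubes for brackets and square tubes for breaks between $f$-factors, and an induction for Loday vs.\ Stasheff whose gluing consistency rests on relations \eqref{eq:1} and \eqref{eq:2}) match the paper's arguments in each subsection. The only device you leave implicit in the inductive step is the paper's Lemma \ref{lemma:CrossCone}, the homeomorphism $C(X)\times C(Y)\cong C\left((X\times C(Y))\cup (C(X)\times Y)\right)$, which is what allows the missing facets $K_s\times_s K_t$ to be exhibited as cones over subcomplexes of $\partial\widehat{K}_{n-1}$ before relations \eqref{eq:1} and \eqref{eq:2} locate those subcomplexes inside the enlargement.
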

	\begin{proof}
		We prove the isomorphisms in the next three subsections. In \S \ref{LodayStasheff} we prove that the complexes obtained via the cone construction of Loday are combinatorially isomorphic to the Stasheff complexes (Theorem \ref{thm:StasheffLoday}). In \S \ref{StasMulti} we prove that the Stasheff complexes and collapsed multiplihedra are isomorphic (Proposition \ref{thm:StasMulti}). Finally, in \S \ref{MultiCubea}, the isomorphism between the collapsed multiplihedra and graph cubeahedra is shown (Proposition \ref{thm:MultiCubea}). Combining all three, we have our required result.
	\end{proof}
	
	\subsection{Loday's construction vs Stasheff complexes}\label{LodayStasheff}
	By Stasheff's description, $K_{n+1}$ is the cone over its boundary elements $K_p\times_r K_q$ for $p+q=n+2$, $2\leq p\leq n$ and $r=1,2,\ldots, p.$ On the other hand, consider $C(\widehat{K}_n)$, where $\widehat{K}_n$ consists of the initial $K_n$ together with $K_{p+1}\times_r K_q$ such that $p+q=n+1$, $2\leq p\leq n-1$ and $r=1,2,\ldots, p$. This enlargement $\widehat{K}_n$ can be described in terms of bracketing as follows.
	\begin{itemize}
		\item $K_n$ corresponds to $0$-bracketing of the word $x_1x_2\ldots x_n$ i.e., the word itself or the trivial bracketing $(x_1x_2\ldots x_n)$. The immediate faces i.e., the boundary consists of $K_p\times_r K_q$ with $p+q=n+1$, $2\leq p\leq n-1$ and $r=1,2,\ldots,p$. Now $K_p\times_r K_q$ corresponds to the $1$-bracketing $x_1\ldots x_{r-1} (x_r\ldots x_{r+q-1})x_{r+q}\ldots x_n$.
		\item The enlargement $\widehat{K}_n$ corresponds to the adding of a letter $x_{n+1}$ to the right of the bracketing corresponding to $K_n$. Then the bracketing $x_1\ldots x_{r-1} (x_r\ldots x_{r+q-1})x_{r+q}\ldots x_n$ extends to $x_1\ldots x_{r-1} (x_r\ldots x_{r+q-1})x_{r+q}\ldots x_nx_{n+1}$ for each $p,q,r$ such that $p+q=n+1$, $2\leq p\leq n-1$, and $r=1,2,\ldots, p$. Also the initial $K_n$ i.e., $(x_1x_2\ldots x_n)$ extends to $(x_1x_2\ldots x_n)x_{n+1}$, which corresponds to $K_2\times_1 K_n$ in $K_{n+1}$. 
		\item Finally one takes cone over the enlarged complex to obtain $K_{n+1}.$
	\end{itemize}
	From the above description, $\widehat{K}_n$ can be thought of as union of $K_p\times_r K_q$ with $p+q=(n+1)+1$ for $2\leq p\leq n$ and $r=1,2,\ldots, p-1$. Thus $\widehat{K}_n$ is a part of the boundary of $K_{n+1}$ (following Stasheff's description).
	
	\begin{thm}\label{thm:StasheffLoday}
		Stasheff complexes are combinatorially isomorphic to Loday's cone construction of associahedra.
	\end{thm}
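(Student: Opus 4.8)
The plan is to prove the statement at the level of face posets and to argue by induction on $n$, the base cases $n=1,2$ (where $K_{2},K_{3}$ are a point and an interval) being immediate. Throughout I identify the faces of the Stasheff polytope $K_{n+1}$ with the bracketings of $x_{1}\cdots x_{n+1}$ as in Definition~\ref{def:Associahedra1}, and I write $v$ for the right comb $x_{1}(x_{2}(\cdots(x_{n}x_{n+1})\cdots))$. This $v$ is the unique vertex all of whose brackets contain the last letter $x_{n+1}$; equivalently, since a contiguous bracket containing $x_{n+1}$ must be right-justified, $v$ is the unique vertex lying on none of the facets in which $x_{n+1}$ occurs as a free top-level letter.

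First I would make precise the identification of Loday's enlargement with a hemisphere of $\partial K_{n+1}$. The computation preceding the statement exhibits $\widehat{K}_{n}$ as the union of the facets $K_{p}\times_{r}K_{q}$ with $p+q=n+2$, $2\le p\le n$ and $r\le p-1$, i.e.\ exactly the facets of $K_{n+1}$ in which $x_{n+1}$ is a free top-level letter. Passing to all faces, I would show that the face poset of $\widehat{K}_{n}$ is the sub-poset of $\mathfrak{P}(n+1)$ consisting of those bracketings possessing at least one proper bracket omitting $x_{n+1}$; by the characterization of $v$ above this sub-poset is precisely the antistar $\{\gamma\in\mathfrak{P}(n+1): v\not\le\gamma\}$. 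In particular the vertex set of $\widehat{K}_{n}$ consists of every vertex of $K_{n+1}$ except $v$.

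The key structural remark is then that the proper faces of $K_{n+1}$ split, tautologically, into the antistar $\{\gamma:v\not\le\gamma\}$ and the open star $\{\gamma:v\le\gamma\}$, and that Loday's cone is exactly the operation re-adding the second family on top of the first: the base contributes $\widehat{K}_{n}=\mathrm{antistar}(v)$, the apex is $v$ itself, and every remaining face is obtained by coning from $v$ a ball lying in $\partial\widehat{K}_{n}$. I would therefore define the candidate isomorphism by sending a face of the base to its antistar bracketing, the apex to $v$, each face produced by the cone to the unique bracketing $\gamma$ with $v\le\gamma$ that it represents, and the whole cone to the word $x_{1}\cdots x_{n+1}$.

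The heart of the proof, and the step I expect to be the main obstacle, is verifying that the cone supplies the faces containing $v$ with the correct combinatorial type and the correct incidences to $\widehat{K}_{n}$. The subtlety---precisely what makes Loday's construction non-obvious---is that these new faces are not simplices over $v$: the facet $x_{1}\cdots x_{j-1}(x_{j}\cdots x_{n+1})$ is the product $K_{j}\times K_{n+2-j}$, so genuine squares and pentagons (rather than pyramids) arise as coned faces. I would resolve this by induction together with Stasheff's identifications \eqref{eq:1} and \eqref{eq:2}: each such facet is a product of strictly smaller associahedra in which $v$ restricts to the corresponding right combs, so the inductive hypothesis pins down its face lattice and, via \eqref{eq:1}--\eqref{eq:2}, the manner in which it is glued to $\widehat{K}_{n}$ along the ball $F\cap\partial\widehat{K}_{n}$. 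Checking this first for edges (the minimal refinements, where the gluing of $1$-cells is matched directly) and then propagating up the recursion shows that the coned faces realize exactly the bracketings $\gamma\ge v$ with their refinement order; hence the candidate map is an order isomorphism and $C(\widehat{K}_{n})\cong K_{n+1}$.
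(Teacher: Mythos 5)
Your proposal is correct and takes essentially the same route as the paper: you identify the cone apex with the right comb $v=x_1(x_2(\cdots(x_nx_{n+1})\cdots))$, identify $\widehat{K}_n$ with the facets $K_p\times_r K_q$, $r\le p-1$ (your antistar of $v$), and recover the missing star facets $K_s\times_s K_t$ by induction on smaller associahedra together with Stasheff's identifications (\ref{eq:1})--(\ref{eq:2}), exactly as in the paper's proof. The only device the paper makes explicit that your sketch leaves implicit is Lemma \ref{lemma:CrossCone}, namely $C(X)\times C(Y)\cong C\left((X\times C(Y))\cup(C(X)\times Y)\right)$, which---applied with $K_s=C(\widehat{K}_{s-1})$ and $K_t=C(\widehat{K}_{t-1})$---is precisely what justifies your assertion that each star face ``is obtained by coning from $v$ a ball lying in $\partial\widehat{K}_n$'', and which the paper organizes into an interleaved double induction on the statements $P_{n-2}$ and $Q_{n-3}$.
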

	
	To prove combinatorial isomorphism between the two mentioned models, we must show bijective correspondence between vertices, edges, and faces of each codimension for both models respecting the adjacencies. But the faces of codimension more than $1$ are contained in the faces of codimension $1$. Thus if we have an appropriate bijection between the faces of codimension $1$ respecting the adjacencies for both models, then the resulting models being cone over combinatorially isomorphic codimension $1$ faces, they are combinatorially isomorphic.
	
	\begin{proof}[Proof of Theorem \ref{thm:StasheffLoday}]
		
		It is enough to show that the boundary of $K_{n+1}$ in Loday's construction can be subdivided to match them with the boundary elements $K_p\times_r K_q$ of $K_{n+1}$ in Stasheff model for $p+q=n+2$, $2\leq p\leq n$ and $r=1,\ldots, p$. 
		As observed in the initial discussion, the only missing boundary part of $K_{n+1}$ in Loday's construction is the union of $K_p\times_p K_q$ for $p+q=n+2$ with $2\leq p\leq n$. Note that all these missing faces are adjacent to a common vertex, which corresponds to the right to left $(n-1)$-bracketing $x_1(x_2(\ldots (x_{n-1}(x_n x_{n+1}))...))$. As there are $\binom{n-1}{n-2}=n-1$ many choices for removing $(n-2)$ brackets from a $(n-1)$-bracketing (that corresponds to the vertices of $K_{n+1}$), each vertex of $K_{n+1}$ is adjacent to exactly $n-1$ faces of codimension $1$ of $K_{n+1}$ (by poset description of Stasheff's $K_{n+1}$). So the vertex corresponding to $x_1(x_2(\ldots (x_{n-1}(x_n x_{n+1}))...))$ is not obtained in $\widehat{K}_n$. Now if we consider any other $(n-1)$-bracketing, then there can be at most $n-2$ parentheses after $x_{n+1}$. So removing those parentheses along with some others, we can get a $1$-bracketing that does not enclose $x_{n+1}$ i.e., those vertices are adjacent to some $K_p\times_r K_q$ for $p+q=n+2$ and $r=1,2,\ldots,p-1$. Thus any vertex of $K_{n+1}$ except that corresponding to $x_1(x_2(\ldots (x_{n-1}(x_n x_{n+1}))...))$  is present in $\widehat{K}_n$. We identify this missing vertex with the coning vertex of Loday's construction.\\
		\hspace*{0.5cm}We shall prove that the missing faces of $K_{n+1}$ in $C(\widehat{K}_n)$ can be realized as a cone over some portion of the boundary of $\widehat{K}_n$. Then we will divide the part $C(\partial \widehat{K}_n)$ accordingly to identify those with the missing faces. We will prove this together with the final result by induction on the following statements: 
		
		\begin{itemize}
			\item[I.] $Q_{n-3}:$ $K_p \times_r K_q = C\left((\widehat{K}_{p-1}\times_r K_q) \cup (K_p\times_r \widehat{K}_{q-1})\right) \text{ if } p+q=n+1 \text{ and }p,q\geq 3$.
			
			\item[II.] $P_{n-2}:$ $K_{n}=C\left( \widehat{K}_{n-1}\right)$, $n \geq 3.$
		\end{itemize}
		Here the equalities in the statements represent a combinatorial isomorphism. Note that $Q_{n-3}$ is a collection of statements and the index $r$ is superfluous. We will use the convention that $\widehat{K}_1=\varnothing$, $C(\varnothing)=\{\ast\}$ and allow $p,q\geq 2$. Then $Q_{n-3}$ contains the statement for $K_{n-1}\times_r K_2$ as well as $K_2\times_r K_{n-1}$. Moreover, these are equivalent to the statement $P_{n-3}$ since $K_2$ is a point and $K_{n-1}\times K_2$ is $K_{n-1}$.\\
		\hspace*{0.5cm}The steps of induction are as follows.\\
		
		\noindent\textit{Step 0:} \textit{Show that $P_1$ holds.}\\
		Note that $K_2$ is a point that parametrizes the binary operation. As a point has no boundary, so $\widehat{K}_2$ is also a point, and $C(\widehat{K}_2)$ is an interval. Now $K_3$ parametrizes the family of $\mathrm{3}$-ary operations that relate the two ways of forming a $\mathrm{3}$-ary operation via a given binary operation. Thus, $K_3$ also represents an interval. Here the boundary of $K_3$ consists of two points $K_2\times_1 K_2$ and $K_2\times_2 K_2$. Let us map $K_2\times_1 K_2$ and $K_2\times_2 K_2$ to $\widehat{K}_2$ and the coning point in $C(\widehat{K}_2)$ respectively. Then we can map the other points of $K_3$ linearly to $C(\widehat{K}_2)$. Thus we get $K_3$ and $C(\widehat{K}_2)$ are combinatorially isomorphic. So $P_1$ is true.\\

		\noindent \textit{Step 1:} \textit{Assuming that $P_1$ through $P_{n-4}$ hold, show that $Q_{n-3}$ holds.}\\
		To prove it we will use the following lemma, the proof of which is given at the end of this subsection.
		\begin{lemma}\label{lemma:CrossCone}
			There is a natural homeomorphism
			\begin{displaymath}
				C(X)\times C(Y)\equiv C\left((X\times C(Y)) \cup (C(X)\times Y)\right),
			\end{displaymath}
			where $x_0,y_0$ are cone points for $C(X),C(Y)$ respectively and $(x_0,y_0)$ is the cone point for $C(Z)$, where $Z=(C(X)\times Y)\cup (X\times C(Y))$.
		\end{lemma}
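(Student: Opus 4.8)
The plan is to fix an explicit point-set model of the cone and then exhibit a concrete radial homeomorphism. Write $C(X)=(X\times[0,1])/(X\times\{0\})$, with cone point $x_0$ corresponding to the parameter $s=0$ and with the base copy $X\hookrightarrow C(X)$ sitting at $s=1$; likewise $C(Y)=(Y\times[0,1])/(Y\times\{0\})$ with parameter $t$. Under these identifications, a point of $C(X)\times C(Y)$ is a class $([x,s],[y,t])$ with $(s,t)\in[0,1]^2$, the inclusion $X\times C(Y)$ is exactly the subset $\{s=1\}$, and $C(X)\times Y$ is the subset $\{t=1\}$. Hence
\[
Z=(X\times C(Y))\cup(C(X)\times Y)=\{\,([x,s],[y,t])\ :\ \max\{s,t\}=1\,\},
\]
the ``$L$-shaped'' outer part of the square in the $(s,t)$-coordinates, while the corner $(s,t)=(0,0)$ is precisely the product cone point $(x_0,y_0)$. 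The essential geometric content is that the square $[0,1]^2$ is the cone, with apex at the corner, over this $L$-shaped boundary; I will promote this to a fibered statement over $X$ and $Y$.

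First I would write down the map. For $p=([x,s],[y,t])$ with $(s,t)\neq(0,0)$ set $\lambda=\max\{s,t\}$ and define
\[
\Phi(p)=\Bigl[\,\bigl([x,\,s/\lambda],[y,\,t/\lambda]\bigr),\ \lambda\,\Bigr]\in C(Z),
\]
sending the corner $([x,0],[y,0])$ to the cone point of $C(Z)$. Since $\max\{s/\lambda,t/\lambda\}=1$, the point $([x,s/\lambda],[y,t/\lambda])$ genuinely lies in $Z$, so $\Phi$ is well-defined set-theoretically. Its candidate inverse is the radial rescaling
\[
\Psi\bigl([\,([x,s],[y,t]),\ \lambda\,]\bigr)=\bigl([x,\lambda s],[y,\lambda t]\bigr),\qquad \max\{s,t\}=1,
\]
which sends the cone point of $C(Z)$ to $(x_0,y_0)$. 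A direct check shows $\Phi$ and $\Psi$ are mutually inverse, since the pair $(\lambda,(s,t))$ with $\lambda=\max\{s,t\}$ and the normalized point recover each other.

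The main work, and the only real subtlety, is verifying that both maps respect the cone identifications and are continuous across them. There are two places to watch: the collapsed edges and the apex. If $s=0$ (so $\lambda=t>0$), then $[x,0]=x_0$ is independent of $x$, and $\Phi(p)=[(x_0,[y,1]),t]$ is correspondingly independent of the representative $x$; the symmetric check handles $t=0$, and the analogous check for $\Psi$ at $\lambda s=0$ or $\lambda t=0$ is the same computation. At the apex one must confirm continuity of $\Phi$ as $(s,t)\to(0,0)$: here $\lambda=\max\{s,t\}\to0$, and in $C(Z)$ every sequence $[z,\lambda]$ with $\lambda\to0$ converges to the cone point regardless of $z$, which gives continuity at the corner; continuity of $\Psi$ at $\lambda=0$ is immediate for the same reason. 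Away from these loci the formulas are manifestly continuous because $\max$ and division by $\lambda>0$ are continuous, so I expect the argument to reduce to these boundary verifications rather than any genuine obstruction.

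Finally, I would record naturality. Given maps $X\to X'$ and $Y\to Y'$, the induced maps on cones act only on the $X$- and $Y$-factors and fix the interval parameters $s,t$; since $\Phi$ and $\Psi$ are built entirely from $\max\{s,t\}$ and the rescaling of these parameters, they commute with the induced maps, so the homeomorphism $C(X)\times C(Y)\equiv C(Z)$ is natural in $X$ and $Y$. Specializing this to the cells appearing in Stasheff's boundary, with $C(X)=K_p$ and $C(Y)=K_q$, yields exactly the identity $Q_{n-3}$ needed in Step~1 of the induction.
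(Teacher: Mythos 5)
Your proof is correct, and it carries out the same underlying radial idea as the paper's, but in a genuinely different formal setting. The paper works with cones as sets of formal convex combinations $t\,x_0+(1-t)\,x_1$ (natural for the convex-polytope realization it has in mind) and proves the two set inclusions $C(Z)\subseteq C(X)\times C(Y)$ and $C(X)\times C(Y)\subseteq C(Z)$ by direct algebraic manipulation, splitting the converse inclusion into the cases $t_1=t_2$, $t_1>t_2$, $t_1<t_2$ --- which is precisely your $\lambda=\max\{s,t\}$ in the complementary parametrization. What the paper's route buys is that continuity never needs discussion, since every formula is affine inside an ambient Euclidean realization; what it leaves implicit is the topological content (it establishes an equality of point sets rather than explicitly constructing a homeomorphism of abstract cones). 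Your route, with the quotient model $C(X)=(X\times[0,1])/(X\times\{0\})$ and the mutually inverse maps $\Phi,\Psi$, is more general and yields the naturality statement for free, which the paper does not record.

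One caveat you should patch: your continuity argument at the apex, ``in $C(Z)$ every sequence $[z,\lambda]$ with $\lambda\to 0$ converges to the cone point regardless of $z$,'' is \emph{false} for general non-compact $Z$ with the quotient topology, since a neighborhood of the apex need not contain a uniform collar $Z\times[0,\epsilon)$; relatedly, verifying continuity of $\Phi$ through representatives tacitly uses that the product of the two quotient maps $X\times I\to C(X)$ and $Y\times I\to C(Y)$ is again a quotient map, which requires a local compactness hypothesis. Both issues evaporate in the intended application, where $X$ and $Y$ are compact polyhedra (so $Z$ is compact Hausdorff), and in fact you can sidestep the continuity of $\Phi$ entirely: $\Psi$ is continuous by the universal property of the quotient $Z\times I\to C(Z)$, it is a bijection from the compact space $C(Z)$ to the Hausdorff space $C(X)\times C(Y)$, hence automatically a homeomorphism with $\Phi=\Psi^{-1}$. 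With that one-line repair (or an explicit compactness hypothesis), your argument is complete and somewhat stronger than the paper's.
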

		Now assuming $P_1$ through $P_{n-4}$, we have $K_l=C(\widehat{K}_{l-1})$ for $l=3,4,...,n-2$. Take any $p,q\geq 3$ with $p+q=n+1$ i.e., $p,q$ both ranges through $3$ to $n-2$. So
		\begin{align*}
			K_p\times_r K_q = &\ C(\widehat{K}_{p-1})\times_r C(\widehat{K}_{q-1})\ \ (\text{by the assumption})\\  
			= &\ C\left((\widehat{K}_{p-1}\times_r C(\widehat{K}_{q-1}))\cup (C(\widehat{K}_{p-1})\times_r \widehat{K}_{q-1})\right)\ \ (\text{by the Lemma \ref{lemma:CrossCone}})\\
			= &\ C((\widehat{K}_{p-1}\times_r K_q)\cup (K_p\times_r \widehat{K}_{q-1}))\ \ (\text{by the assumption})
		\end{align*}
		This shows that $Q_{n-3}$ is true.\\
		
		\noindent \textit{Step 2:} \textit{Assuming $P_1$ through $P_{n-3}$, show that $P_{n-2}$ hold.}\\
		 
		As discussed earlier, to prove that $P_{n-2}$ is true, it is enough to show $K_s\times_s K_t$ with $s+t=n+1$ for $s,t\geq 2$ can be obtained from $C(\widehat{K}_{n-1})$. Consider $s,t\geq 2$ with $s+t=n+1$. Then using the conventions $\widehat{K}_1=\varnothing$ and $C(\varnothing)=\{*\}$, we can write
		\begin{align*}
			&\ K_s\times_s K_t\\
			= &\ C(\widehat{K}_{s-1})\times_s C(\widehat{K}_{t-1})\ (\text{by $P_1$ through $P_{n-3}$})\\
			= &\ C\left((\widehat{K}_{s-1}\times_s K_t)\cup (K_s\times_s \widehat{K}_{t-1})\right)\ (\text{by the Lemma \ref{lemma:CrossCone}})\\
			= &\ C\left( \left\{\bigcup_{(p,q,r)\in V_s}\left((K_p\times_r K_q)\times_s K_t\right)\right\}\bigcup \left\{\bigcup_{(p,q,r)\in V_t}\left(K_s\times_s (K_p\times_r K_q)\right)\right\} \right)			
			(\text{by definition of } \widehat{K}_{i-1}),\\[4pt]
			&\text{ where }V_i=\{(a,b,c)\in \mathbb{N}^3: 2\leq a\leq i-1,\ a+b=i+1,\  1\leq c\leq a-1\},\ i=s,t.
		\end{align*}
	
		Now using equation (\ref{eq:2}) (in \S \ref{Stasheff}), we can write 
		$$(K_p\times_r K_q)\times_s K_t=(K_p\times_{s-q+1} K_t)\times_r K_q$$
		(obtained by substituting $r=p,s=q,t=t,k=r,j=s-q+1$) for the terms in the first set of unions. As $K_p\times_{s-q+1} K_t$ is a face of $K_{p+t-1}$, so $(K_p\times_{s-q+1} K_t)\times_r K_q$ is a face of $K_{p+t-1} \times_r K_q$, which is again a face of $K_{n}$ because for $(p,q,r)\in V_s$, $$(p+t-1)+q=p+q+t-1=s+1+t-1=s+t=n+1.$$
		
		Thus $(K_p\times_r K_q)\times_s K_t$ is a face of $K_{p+t-1} \times_r K_q$ of codimension $1$. But as $t\geq 2$ and $1\leq r\leq p-1$, so $r<p+t-1$, which implies that the face $K_{p+t-1} \times_r K_q$ is already present in the enlargement $\widehat{K}_{n-1}$. Thus each term in the first set of unions is already present in $\widehat{K}_{n-1}$.\\
		
		Similarly, using equation (\ref{eq:1}), we have the identification 
		$$K_s\times_s (K_p\times_r K_q) = (K_s\times_{s} K_p)\times_{s+r-1} K_q$$
		(obtained by substituting $r=s,s=p,t=q,k=r,j=s$) for the terms in the second set of unions. Here $(K_s\times_{s} K_p)\times_{s+r-1} K_q$ is a face of $K_{s+p-1}\times_{s+r-1} K_q$, which is a face of $K_n$ because for $(p,q,r)\in V_t$,
		$$(s+p-1)+q=s-1+(p+q)=s-1+t+1=s+t=n+1.$$
		
		Thus $(K_s\times_s K_{p}) \times_{s+r-1} K_q$ is a face of $K_{s+p-1} \times_{s+r-1} K_q$ of codimension $1$. But $r\leq p-1<p$ implies $s+r-1<s+p-1$, which further implies that the face $K_{s+p-1} \times_{s+r-1} K_q$ is already present in the enlargement $\widehat{K}_{n-1}$. Thus each term in the second set of unions is also present in $\widehat{K}_{n-1}$.\\
	
		It follows that all the parts in the unions are present as a part of the boundary of $\widehat{K}_{n-1}$. Thus the cone over that particular part of the boundary of $\widehat{K}_{n-1}$, we will get $K_s\times_s K_t$ for all $s,t\geq 2$ (with $s+t=n+1$). Also, these are present as a part of boundary of $C(\widehat{K}_{n-1})$. Therefore we get a bijection between the faces (of codimension $1$) of $K_n$ and $\widehat{K}_{n-1}$. Consequently, they are combinatorially isomorphic. So $P_{n-2}$ is true. This completes the induction step as well as the proof of the theorem.
	\end{proof}
	\begin{remark}
		In the above isomorphism, we mapped the starting $K_n$ to $K_2\times_1 K_n$ and the extension of the boundary element $K_p\times_r K_q$ to $K_{p+1}\times_r K_q$. Similarly we could map the starting $K_n$ to $K_2\times_2 K_n$ and the extension of the boundary $K_p\times_r K_q$ to $K_{p+1}\times_{r+1} K_q$. But if we want to map the starting $K_n$ to $K_n\times_r K_2$ ($r=1,2,...,n$), the corresponding extension of boundary $K_p\times_t K_q$ should map to
		$$\begin{cases}
			K_p\times_t K_{q+1} & \text{if }t\leq r\leq t+q-1\\
			K_{p+1}\times_t K_q & \text{if }r>t+q-1\\
			K_{p+1}\times_{t+1} K_q & \text{if }r<t.
		\end{cases}$$
		With a slight modification in the above proof, one can similarly prove that this produces an isomorphism. This, in turn, implies that the faces $K_n\times_r K_2$ or $K_2\times_r K_n$ of $K_{n+1}$ are all equivalent from the point of view of Loday's construction.
	\end{remark}
	We end this subsection with the proof of Lemma \ref{lemma:CrossCone}.
	\begin{proof}[Proof of Lemma \ref{lemma:CrossCone}]
		We will prove the equality by showing both inclusions. First suppose $(x,y)=t(x_0,y_0)+(1-t)(x_1,y_1)\in C(Z)$, where $t\in [0,1]$ and $(x_1,y_1)\in Z$.
		Without loss of generality suppose $(x_1,y_1)\in C(X)\times Y$ i.e., $x_1=t'x_0+(1-t')x'_1$ for some $t'\in [0,1]$ and $x'_1\in X$.
		So 
		\begin{align*}
			(x,y)
			=&\ (tx_0+(1-t)x_1,ty_0+(1-t)y_1)\\
			=&\ (tx_0+(1-t)t'x_0+(1-t)(1-t')x'_1,ty_0+(1-t)y_1)\\
			=&\ ((1-(1-t)(1-t'))x_0+(1-t)(1-t')x'_1,ty_0+(1-t)y_1)\\
			=&\ (t_1x_0+(1-t_1)x'_1,ty_0+(1-t)y_1)
			\in  C(X)\times C(Y)
		\end{align*}
		and $t_1=1-(1-t)(1-t')$. This implies that $C(Z)\subseteq C(X)\times C(Y).$
		\begin{figure}[H]
			\centering
			\includegraphics[width=0.55\linewidth]{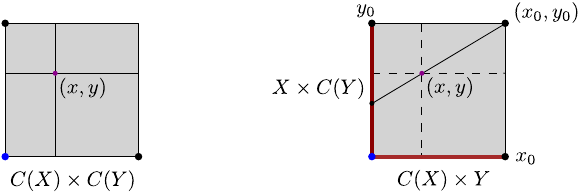}
			\caption{Visual proof when $X=Y=$ point}
		\end{figure}
		Conversely let $(x,y)=(t_1x_0+(1-t_1)x_1,t_2y_0+(1-t_2)y_1)\in C(X)\times C(Y)$ for some $x_1\in X$, $y_1\in Y$ and $t_1,t_2\in [0,1]$. Now consider the following cases\\
		\noindent
		\textit{Case I}: $t_1=t_2=t$.
		$$(x,y)=t(x_0,y_0)+(1-t)(x_1,y_1)\in C(Z).$$
		\textit{Case II}: $t_1>t_2$.
		\begin{align*}
			(x,y)=&\  t_2(x_0,y_0)+(1-t_2)\left( \textstyle{\frac{t_1-t_2}{1-t_2}}x_0+\textstyle{\frac{1-t_1}{1-t_2}}x_1,y_1\right)\\
			=&\  t_2(x_0,y_0) +(1-t_2) (t'x_0+ (1-t') x_1,y_1)\in  C(Z),\\ 
			&\ \text{ where }t'= \textstyle{\frac{t_1-t_2}{1-t_2}}.
		\end{align*}
		\textit{Case III}: $t_1<t_2$.
		\begin{align*}
			(x,y)=&\  t_1(x_0,y_0)+(1-t_1)\left(x_1, \textstyle{\frac{t_2-t_1}{1-t_1}}y_0+ \textstyle{\frac{1-t_2}{1-t_1}}y_1\right)\\
			=&\  t_1(x_0,y_0)+(1-t_1)(x_1,t'y_0+(1-t')y_1)\in C(Z),\\
			&\ \text{ where }t'= \textstyle{\frac{t_2-t_1}{1-t_1}}.
		\end{align*}
		Combining all three cases, we conclude that
		$(x,y)\in C(Z)$ and
		consequently $C(X)\times C(Y)\subseteq C(Z)$.
	\end{proof}

	\subsection{Stasheff complexes vs Collapsed Multiplihedra}\label{StasMulti}
	We shall use the Definition \ref{def:Associahedra1}  for Stasheff complexes. Similarly, due to Lemma \ref{lem:ConRealMulti}, we will use Definition \ref{def:CollMulti} for collapsed multiplihedra.
	
	\begin{prop}\label{thm:StasMulti}
		Stasheff complexes $K_{n+1}$ and collapsed multiplihedra $J'_n$ are combinatorially isomorphic.
	\end{prop}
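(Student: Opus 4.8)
The plan is to prove the statement at the level of face posets and then invoke the two definitions directly. By Definition \ref{def:Associahedra1} the face poset of $K_{n+1}$ is $\mathfrak{P}(n+1)$, the bracketings of a word $x_1x_2\cdots x_{n+1}$ ordered by adjoining brackets, while by Definition \ref{def:CollMulti} the face poset of $J'_n$ is $(\mathfrak{J}'_n,\prec)$. Both polytopes have dimension $n-1$, so it suffices to produce an isomorphism of posets $\Psi\colon\mathfrak{P}(n+1)\to\mathfrak{J}'_n$; two convex polytopes with isomorphic face posets are combinatorially isomorphic.

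To define $\Psi$ I would single out the leftmost letter $x_1$ as a marker for the map $f$ and set $a_i:=x_{i+1}$ for $1\le i\le n$. Given a bracketing $P$ of $x_1\cdots x_{n+1}$, the brackets that contain $x_1$ are pairwise nested initial segments, so they form a chain $\{x_1\}\subset D_1\subset D_2\subset\cdots\subset D_{s}=\{x_1,\dots,x_{n+1}\}$ whose top member is the trivial outer bracket (equivalently, under the non-crossing diagonal description these are exactly the diagonals incident to one fixed vertex of the $(n+2)$-gon). For each level $i$, the blocks of consecutive $a$'s occurring as the top-level units that lie inside $D_i$ but not inside $D_{i-1}$—carrying their inherited $x_1$-free internal brackets—are the factors grouped at level $i$, and I would set
\[
\Psi(P)=f\bigl(Y_{1,1}\cdot\,\cdots\,\cdot Y_{1,l_1}\bigr)\,f\bigl(Y_{2,1}\cdot\,\cdots\,\cdot Y_{2,l_2}\bigr)\cdots f\bigl(Y_{s,1}\cdot\,\cdots\,\cdot Y_{s,l_s}\bigr),
\]
the product being taken in $B$ from the innermost level outward. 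Thus the nested $x_1$-brackets dictate how $f$ is distributed across $B$ (the degenerate type III grouping), the absence of a bracket between two blocks of a common level is recorded by a dot (type II), and an $x_1$-free bracket inside a single block records its internal association in $A$ (type I). Reading the factors, dots and inner brackets back off an element of $\mathfrak{J}'_n$ reconstructs $P$ uniquely, so $\Psi$ is a bijection.

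Next I would check that $\Psi$ is rank-preserving and monotone. The key observation is that the number of nontrivial brackets of $P$ equals the codimension of the corresponding face. I would verify the covering relations by analysing the effect of adjoining a single new bracket to $P$: a new bracket lying inside one block realises operation (\ref{it:op1}); a new $x_1$-free bracket enclosing several consecutive dot-separated blocks of one level realises operation (\ref{it:op3}); and a new $x_1$-bracket that peels off the inner blocks into their own $f$ realises operation (\ref{it:op2}). Each such move changes the bracket count by exactly one, so $\Psi$ sends covers to covers; running the same analysis for $\Psi^{-1}$ shows covers pull back to covers, which gives the poset isomorphism.

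The main obstacle I expect is the bookkeeping around the dot (type II) operations: making the trichotomy ``adding one bracket'' genuinely exhaustive and mutually exclusive, and matching it against the covering relations of $(\mathfrak{J}'_n,\prec)$ exactly once. In particular, operation (\ref{it:op3}) is permitted to delete several consecutive dots simultaneously, and I must confirm that the corresponding move in $\mathfrak{P}(n+1)$, namely adjoining the single enclosing bracket, is indeed one covering step, so that the rank function (number of brackets) is respected throughout. Once the three cases are seen to be jointly exhaustive and rank-compatible, monotonicity in both directions follows and the proof is complete. As a sanity check, the case $n=3$ already displays the full pattern: the five bracketings of $x_1x_2x_3x_4$ correspond to the five vertices of $J'_3$ and the edges match, reproducing the pentagon.
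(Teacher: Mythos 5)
Your proposal is correct and is essentially the paper's proof in mirror image: the paper defines the inverse bijection $\phi:\mathfrak{J}'_n\to\mathfrak{P}(n+1)$ by appending a marker letter $a_{n+1}$ on the \emph{right} (each $f$ becomes a bracket nested around $a_{n+1}$, each $\cdot$ becomes an absent bracket), which is exactly your chain-of-marker-brackets construction with $x_1$ replaced by the last letter. Your verification that adjoining a single bracket realises exactly one of operations (\ref{it:op1}), (\ref{it:op2}), (\ref{it:op3}) is a slightly more detailed account of the same three-case order-preservation check the paper performs.
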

	\begin{proof}
		Both $K_{n+1}$ and $J'_n$ are cellular  complexes whose face posets are isomorphic to $\mathfrak{P}(n+1)$ and $\mathfrak{J}'_n$ respectively. Therefore, in order to exhibit an isomorphism between $J'_n$ and $K_{n+1}$, it suffices to find a bijection between $\mathfrak{P}(n+1)$ and $\mathfrak{J}'_n$ as posets.
		
		Define $\phi:\mathfrak{J}'_n\to \mathfrak{P}(n+1)$ as follows 
		\begin{align*}
			f(X_1) & \mapsto f(X_1)a_{n+1}:=(X_1)a_{n+1}\\
			f((X_1)\cdot\ldots \cdot (X_{k-1})\cdot (X_k)) & \mapsto ((X_1)\cdot \ldots \cdot (X_{k-1})\cdot (X_k)) a_{n+1} := (X_1)\ldots (X_{k-1}) (X_k)a_{n+1}
		\end{align*}
		
		\begin{align*}
			\phi\left(f(X_1)\ldots f(X_{k-1})f(X_k)\right)
			&= f(X_1)\ldots f(X_{k-1}) f(X_k)a_{n+1}\\
			&= f(X_1) \ldots f(X_{k-1}) ((X_k)a_{n+1})\\
			&= f(X_1) \ldots f(X_{k-2})((X_{k-1})((X_k)a_{n+1}))\\
			&= \cdots\\
			&= (X_1)(\ldots ((X_{k-1})((X_k)a_{n+1})) \ldots),\\[4pt]
			\phi\left(f((X_1)\cdot (X_2))f((X_{3})\cdot (X_4)\cdot (X_5))\right)
			&= f((X_1)\cdot (X_2))(((X_{3})\cdot (X_4)\cdot (X_5))a_{n+1})\\
			&= ((X_1)\cdot (X_2))((X_{3}) (X_4) (X_5)a_{n+1})\\
			&= (X_1)(X_2)((X_{3}) (X_4)(X_5)a_{n+1}).
		\end{align*}
		Here $X_i$'s are some rule of association of the elements $a_1,a_2,...,a_n$ in $A$ of some length such that the total length of all $X_i$'s is $n$ and $a_{n+1}$ is some different element in $A$.
		In the above correspondence, note that the bracketing in $X_i$'s are not changed. We only include some pair of brackets removing $f$'s or remove $\cdot$ and keep it as it is with an extra letter $a_{n+1}$ on the right to get a bracketing of the word $a_1a_2\ldots a_{n+1}$. Also, note that each parenthesis right to the letter $a_{n+1}$ determines the number of $f$ and their position as well, where no parentheses mean only single $f$ with the $\cdot$'s in between the associated words.
		Thus, the position of each $f$ and $\cdot$ gives a unique bracketing of the word $a_1a_2\ldots a_{n+1}$ and the process can also be reversed. So $\phi$ is bijective. 
		Now in order to check $\phi$ preserves the poset relation, we need to show $\phi(P\prec P')\implies \phi(P)< \phi(P').$ There are three possible ways (cf. operation (\ref{it:op1}), (\ref{it:op2}), (\ref{it:op3})) by which $P$ can be related to $P'$.
		\begin{enumerate}
			\item $P$ is obtained from $P'$ by adding  brackets in domain. Since $\phi$ do not interact with the brackets in domain, $\phi(P)$ is also obtained from $\phi(P')$ by adding brackets i.e., $\phi(P)< \phi(P')$.
			\item $P$ is obtained from $P'$ by replacing $\cdot$ by `$)f($'. Thus $P$ contains more $f$ than $P'$. But from the correspondence, we know each $f$ corresponds to a pair of brackets, so $\phi(P)$ must be obtained from $\phi(P')$ by adding brackets i.e., $\phi(P)< \phi(P')$.
			\item $P$ is obtained from $P'$ by removing one or more consecutive $\cdot$ by adding a pair of brackets that encloses all the adjacent elements to those $\cdot$. To obtain $P$, this process adds brackets to $P'$ and $\phi$ does not change the parent bracketing. So so $\phi(P)$ must be obtained from $\phi(P')$ by adding brackets i.e., $\phi(P)< \phi(P')$.
		\end{enumerate}
		Thus $\phi$ defines a bijection of the posets $\mathfrak{J}'_n$ and $\mathfrak{P}(n+1)$. Hence $J'_n$ and $K_{n+1}$ are combinatorially isomorphic.
	\end{proof}

	\subsection{Collapsed Multiplihedra vs Graph Cubeahedra}\label{MultiCubea}
	\begin{prop}\label{thm:MultiCubea}
		Collapsed multiplihedra $J'_{n+1}$ and graph cubeahedra $\mathcal{C}P_n$ for path graph $P_n$ with $n$ nodes are combinatorially isomorphic.
	\end{prop}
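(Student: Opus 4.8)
The plan is to realize both face posets concretely and then exhibit an explicit order-isomorphism between them. By Definition~\ref{def:CollMulti} the face poset of $J'_{n+1}$ is $(\mathfrak{J}'_{n+1},\prec)$, and by Theorem~\ref{thm:OrderCubeahedra} the face poset of $\mathcal{C}P_n$ is the poset of design tubings of $P_n$ ordered so that adding tubes decreases the element. Since both $J'_{n+1}$ and $\mathcal{C}P_n$ are convex polytopes of dimension $n$ determined by their face posets, it suffices to produce a bijection between design tubings of $P_n$ and elements of $\mathfrak{J}'_{n+1}$ that is order-preserving and order-reflecting. I label the $n$ nodes of $P_n$ by $1,\dots,n$ and regard node $i$ as the gap between the letters $a_i$ and $a_{i+1}$ of the word $a_1a_2\cdots a_{n+1}$, so that a round tube occupying consecutive nodes $\{l,\dots,r\}$ corresponds to a bracket enclosing $a_l\cdots a_{r+1}$.

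Next I would define a map $\Psi$ from design tubings to $\mathfrak{J}'_{n+1}$ by reading off an element from a tubing as follows. The square tubes dictate the splitting into separate $f$-factors: a square tube at node $i$ forces the symbol $)f($ between $a_i$ and $a_{i+1}$, so the squared nodes cut the word into consecutive blocks, each wrapped in its own $f$ and multiplied flatly in the associative target $B$ (that is, a degenerate type III product). Within a block the round tubes give honest brackets in the domain, nested exactly as the laminar family of tubes is nested, while a node covered by no round tube and carrying no square tube contributes a $\cdot$ (a pending homotopy) at top level of its block. Thus round tubes produce domain bracketing (type I/II), square tubes produce the $)f($ separations, and uncovered unsquared nodes produce the $\cdot$'s; the empty tubing yields the maximal element $f(a_1\cdot a_2\,\cdots\,\cdot a_{n+1})$ and the maximal tubings yield the vertex elements. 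The inverse of $\Psi$ records each $)f($ as a square tube, each pair of domain parentheses as a round tube, and each $\cdot$ as an uncovered unsquared node.

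I would then check that the compatibility conditions on design tubes match precisely the admissible elements of $\mathfrak{J}'_{n+1}$. Two round tubes are compatible iff they are nested or separated by a gap; in bracket language adjacency or intersection of their node-intervals is exactly the crossing of the corresponding brackets (they would share a boundary letter), so compatible round tubes correspond to valid non-crossing bracketings within a block. A round tube and a square tube are compatible iff the squared node lies outside the round tube, which says one cannot split an $f$-factor in the interior of a domain bracket; and distinct square tubes are always compatible, matching the fact that one may split at several independent positions. Conversely every admissible $\mathfrak{J}'_{n+1}$-element arises this way, so $\Psi$ is a bijection once redundant outer brackets are normalized away.

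Finally, for the order I would verify that each elementary move matches. Adding a round tube either subdivides an already-bracketed region (operation~\ref{it:op1}, adding brackets in the domain) or converts a string of consecutive $\cdot$'s into a single bracket (operation~\ref{it:op3}), while adding a square tube at a $\cdot$-node replaces that $\cdot$ by $)f($ (operation~\ref{it:op2}); each such addition strictly decreases the $\mathfrak{J}'_{n+1}$-element, and conversely each of the three defining operations is realized by adding tubes. Hence $\Psi$ and its inverse are order-preserving, giving the desired poset isomorphism and therefore the combinatorial isomorphism $\mathcal{C}P_n\cong J'_{n+1}$. I expect the main obstacle to be the bookkeeping that makes $\Psi$ genuinely well defined and bijective---in particular handling nested round tubes and redundant brackets---together with the exact matching of the three tube-compatibility conditions against the structure of type I, type II and degenerate type III elements, which is where the content of the proposition really lies.
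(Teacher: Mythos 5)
Your proposal is correct and follows essentially the same route as the paper: the identical dictionary (round tube $\{l,\dots,r\}$ $\leftrightarrow$ domain bracket around $a_l\cdots a_{r+1}$, square tube at node $i$ $\leftrightarrow$ `$)f($', uncovered node $\leftrightarrow$ `$\cdot$'), followed by matching tube additions/removals against operations (\ref{it:op1})--(\ref{it:op3}) to see that the posets agree. Your explicit check that the design-tube compatibility conditions correspond exactly to admissibility in $\mathfrak{J}'_{n+1}$, and that the bijection is order-reflecting as well as order-preserving, is carried out more carefully than in the paper's proof, which leaves those verifications implicit.
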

	\begin{proof}
		Recall from Theorem \ref{thm:OrderCubeahedra} that the graph cubeahedron $\mathcal{C}P_n$ is a convex polytope of dimension $n$ whose face poset is isomorphic to the set of design tubings of $P_n$. Recall that the collapsed multiplihedra $J'_{n+1}$ is a cellular complex of dimension $n$ whose face poset is isomorphic to $\mathfrak{J}'_{n+1}$. Thus, to describe an isomorphism, it is enough to prove a bijection at the poset level.\\
		\hspace*{0.5cm}A bijection between the design tubings and the elements of $\mathfrak{J}'_{n+1}$ is defined through the following correspondences:
		\begin{itemize}
			\item Put $a_1$ through $a_{n+1}$ starting from the left of the left-most node to the right of the right-most node of the graph:
			\begin{figure}[H]
				\centering
				\includegraphics[page=1]{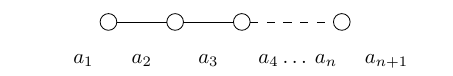}
				\caption{Initial step}
			\end{figure}
			\item Each round tube corresponds to a pair of parentheses. If the round tube includes $k$-th and $(k+r-1)$-th node of the graph, then the corresponding parentheses include $a_k$ through $a_{k+r}$.
			\begin{figure}[H]
				\centering
				\includegraphics[page=3]{Tubing_images/ditems.pdf}
				\caption{Correspondence of round tube}
			\end{figure}
			\item Each square tube corresponds to the inclusion of `$)f($' in the string $f(a_1a_2\ldots a_{n+1})$. If the square tube include $k$-th node of the graph, then `$)f($' will be included in between $a_k$ and $a_{k+1}$.
			\begin{figure}[H]
				\centering
				\includegraphics[page=5]{Tubing_images/ditems.pdf}
				\caption{Correspondence of square tube}
			\end{figure}
			\item An empty node in a tubing corresponds to `$\cdot$' i.e. if $k$-th node of the graph is not included by any tube of the given tubing, then put a `$\cdot$' between $a_k$ and $a_{k+1}$.  
			\begin{figure}[H]
				\centering
				\includegraphics[page=6]{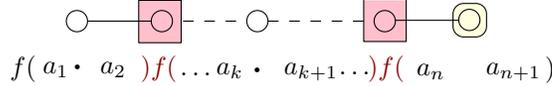}
				\caption{Correspondence of empty node}
			\end{figure}
		\end{itemize}
		Finally, as the position of each tube and its appearance give a unique element of $\mathfrak{J}'_{n+1}$, we get a bijective correspondence between design tubings and elements of $\mathfrak{J}'_{n+1}.$ An example, assuming $n=6$, is given below.
		\begin{figure}[H]
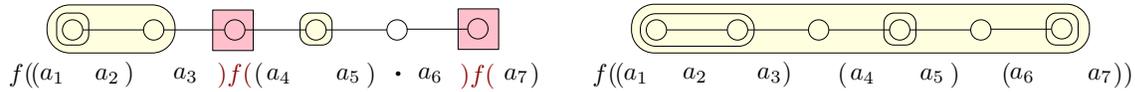

			\centering
			\begin{subfigure}[b]{0.47\linewidth}
				\centering
				\includegraphics[page=7, width= \linewidth]{Tubing_images/ditems.pdf}
			\end{subfigure}
			\hspace{0.2cm}
			\begin{subfigure}[b]{0.47\linewidth}
				\centering
				\includegraphics[page=8, width= \linewidth]{Tubing_images/ditems.pdf}
			\end{subfigure}
			\caption{Bijection between design tubings and multiplihedra}
		\end{figure}
		It follows from the correspondence that the removal of a round tube corresponds to the removal of a pair of parentheses or adding `$\cdot$' and the removal of a square tube corresponds to replacing `$)f($' by `$\cdot$'. This shows that the poset relation between design tubings matches with the poset relation in $\mathfrak{J}'_{n+1}.$ As the two posets are isomorphic, this finishes the proof.
	\end{proof}

	\bibliographystyle{siam}
	
	\bibliography{Bibliography}

\end{document}